\documentclass[oneside]{amsart}

\usepackage[a4paper]{geometry}
\usepackage[T1]{fontenc}
\usepackage[expert]{mathdesign}
\usepackage{lmodern}

\usepackage{url}					
\usepackage[english]{babel}
\usepackage[numbers,sort]{natbib}
\usepackage{amsmath}
\usepackage[inline]{enumitem}
\usepackage{wasysym}
\usepackage{amsthm}
\usepackage{amssymb}
\usepackage{latexsym}
\usepackage{amsaddr}
\usepackage{thmtools}
\usepackage{mathtools}
\usepackage{stmaryrd}
\usepackage{microtype}
\usepackage{caption}
\usepackage{cleveref}
\usepackage{setspace}

\declaretheoremstyle[
  headfont=\scshape,
  notefont=\normalfont, notebraces={(}{)},
  bodyfont=\itshape,
  postheadspace=.5em,
  spaceabove=9pt,
  spacebelow=6pt,
  headpunct={.}
]{thstyle}

\declaretheoremstyle[
  headfont=\scshape,
  notefont=\normalfont, notebraces={(}{)},
  bodyfont=\normalfont,
  spaceabove=9pt,
  spacebelow=6pt,
  postheadspace=.5em,
  headpunct={.}
]{defstyle}

\declaretheoremstyle[
  headfont=\itshape,
  notefont=\normalfont, notebraces={(}{)},
  bodyfont=\normalfont,
  spaceabove=6pt,
  spacebelow=6pt,
  postheadspace=.5em,
  headpunct={.}
]{remarkstyle}

\declaretheoremstyle[
  headfont=\scshape,
  notefont=\normalfont, notebraces={(}{)},
  bodyfont=\normalfont,
  spaceabove=9pt,
  spacebelow=6pt,
  postheadspace=.5em,
  headpunct={.}
]{exstyle}

\declaretheoremstyle[
  headfont=\itshape,
  notefont=\itshape, notebraces={(}{)},
  bodyfont=\normalfont,
  spaceabove=6pt,
  spacebelow=9pt,
  postheadspace=\labelsep,
  headpunct={.}
]{proofstyle}

\declaretheorem[style=proofstyle,unnumbered,qed=$\square$]{proof}
\declaretheorem[style=remarkstyle,numbered=no]{remark}

\declaretheorem[style=thstyle,numberwithin=section,refname={theorem,theorems},Refname={Theorem,Theorems}]{theorem}
\declaretheorem[style=thstyle,sibling=theorem,refname={proposition,propositions},Refname={Proposition,Propositions}]{proposition}
\declaretheorem[style=thstyle,sibling=theorem,refname={lemma,lemmas},Refname={Lemma,Lemmas}]{lemma}
\declaretheorem[style=thstyle,sibling=theorem,refname={corollary,corollaries},Refname={Corollary,Corollaries}]{corollary}
\declaretheorem[style=defstyle,sibling=theorem,refname={definition,definitions},Refname={Definition,Definitions}]{definition}

\newcommand{\verum}{\ensuremath{\top}}
\newcommand{\falsum}{\ensuremath{\bot}}
\newcommand{\limpl}{\ensuremath{\rightarrow}}
\newcommand{\lequiv}{\ensuremath{\leftrightarrow}}
\providecommand{\land}{\ensuremath{\wedge}}
\providecommand{\lor}{\ensuremath{\vee}}
\newcommand{\lneg}{\ensuremath{\neg}}

\newcommand{\proves}{\ensuremath{\vdash}}
\newcommand{\nproves}{\ensuremath{\nvdash}}
\newcommand{\ca}[1]{\ensuremath{\mathcal{#1}}}
\newcommand{\fk}[1]{\ensuremath{\mathfrak{#1}}}
\newcommand{\msGLP}{\ensuremath{\mathsf{GLP}^\ast}}
\newcommand{\mGLP}[1]{\ensuremath{\mathsf{GLP}^\ast_{#1}}}

\newcommand{\msGLPS}{\ensuremath{\mathsf{GLPS}^\ast}}
\newcommand{\msJ}{\ensuremath{\mathsf{J}^\ast}}
\newcommand{\mJ}[1]{\ensuremath{\mathsf{J}^\ast_{#1}}}

\newcommand{\msRCo}{\ensuremath{\mathsf{RC}^\ast\!\omega}}
\newcommand{\RC}{\ensuremath{\mathsf{RC}}}

\newcommand{\GLP}{\ensuremath{\mathsf{GLP}}}
\newcommand{\GLV}{\ensuremath{\mathsf{GLV}}}
\newcommand{\IPC}{\ensuremath{\mathsf{IPC}}}
\newcommand{\J}{\ensuremath{\mathsf{J}}}
\newcommand{\GL}{\ensuremath{\mathsf{GL}}}

\newcommand{\PA}{\ensuremath{\mathsf{PA}}}

\newcommand{\ndia}[1]{\ensuremath{\langle #1 \rangle}}
\newcommand{\nbox}[1]{\ensuremath{[#1]}}
\newcommand{\sort}[1]{\ensuremath{|#1|}}
\newcommand{\val}[1]{\ensuremath{\llbracket #1 \rrbracket}}

\renewcommand{\emptyset}{\ensuremath{\varnothing}}
\newcommand{\defequiv}{\ensuremath{\mathrel{{\Longleftrightarrow}_{\mathit{df}}}}}

\newcommand{\gives}{\ensuremath{\Rightarrow}}

\newcommand{\code}[1]{\ensuremath{{\ulcorner #1 \urcorner}}}
\newcommand{\Conn}{\ensuremath{\underline{\mathsf{Con}}}}
\newcommand{\Con}{\ensuremath{\mathsf{Con}}}
\newcommand{\Prov}{\ensuremath{\mathsf{Prv}}}
\newcommand{\Prf}{\ensuremath{\mathsf{Prf}}}
\newcommand{\mand}{\ensuremath{\mathrel{\&}}}
\newcommand{\ol}[1]{\ensuremath{\overline{#1}}}
\newcommand{\ul}[1]{\ensuremath{\underline{#1}}}
\newcommand{\stdmodel}{\ensuremath{\mathbb{N}}}
\newcommand{\PSpace}{\textsc{PSpace}}
\newcommand{\Th}{\ensuremath{\mathsf{Th}}}

\newcommand{\Ax}{\ensuremath{\mathsf{Ax}}}
\newcommand{\True}{\ensuremath{\mathsf{True}}}

\makeatletter
\g@addto@macro\bfseries{\boldmath}
\makeatother

\makeatletter
\newcommand{\leqnomode}{\tagsleft@true}
\newcommand{\reqnomode}{\tagsleft@false}
\makeatother

\crefformat{equation}{{}#2(#1)#3}

\title{\bf A Many-Sorted Variant of Japaridze's Polymodal Provability
  Logic}

\author{Gerald Berger}
\thanks{Corresponding author's address: Gerald Berger, Insitute of Logic and Computation, TU Wien, Favoritenstra\ss e 9--11, 1040 Wien, Austria}
\address{Institute of Logic and Computation, TU Wien}
\email{gberger@dbai.tuwien.ac.at}
\author{Lev D.~Beklemishev}
\email{bekl@mi.ras.ru}
\address{Steklov Mathematical Institute, Russian Academy of Sciences \\ National Research University Higher School of Economics \\  Moscow M.V.~Lomonosov State University}
\author{Hans Tompits}
\email{tompits@kr.tuwien.ac.at}
\address{Institute for Logic and Computation, TU Wien}

\thanks{A version of this article has been published in the \emph{Logic Journal of the IGPL}, 26(5): 505--538 (2018).}

\makeatletter
\let\xx@thm\@thm
\AtBeginDocument{\let\@thm\xx@thm}
\makeatother

\begin{document}
\setlist[1]{itemsep=0pt,parsep=6pt,topsep=9pt,labelindent=\parindent}
\setlist[enumerate,1]{label={\textnormal{(\roman*)}}, ref=(\roman*), leftmargin=3em}
\setlist[enumerate,2]{label={\textnormal{(\arabic*)}}, ref=(\arabic*)}
\interfootnotelinepenalty=10000

\maketitle
\begin{abstract}
  We consider a many-sorted variant of Japaridze's polymodal provability
  logic $\GLP$. In this variant, which is denoted $\msGLP$,
  propositional variables are assigned sorts $\alpha \leq \omega$, where
  variables of finite sort $n < \omega$ are interpreted as
  $\Pi_{n+1}$-sentences of the arithmetical hierarchy, while those of
  sort $\omega$ range over arbitrary ones. We prove that $\msGLP$ is
  arithmetically complete with respect to this interpretation. Moreover,
  we relate $\msGLP$ to its one-sorted counterpart $\GLP$ and prove that
  the former inherits some well-known properties of the latter, like
  Craig interpolation and \PSpace\ decidability. We also study a
  positive variant of $\msGLP$ which allows for an even richer
  arithmetical interpretation---variables are permitted to range over
  theories rather than single sentences. This interpretation in
  turn allows the introduction of a modality that corresponds to the
  full uniform reflection principle. We show that our positive variant
  of $\msGLP$ is arithmetically complete.

\bigskip\noindent \textbf{Keywords:} provability logics, mathematical
logic, modal logic, formal arithmetic, arithmetical completeness
\end{abstract}

\section{Introduction}

The polymodal provability logic $\GLP$, due to~\cite{japaridze:1988}, has
received considerable interest in the mathematical logic community. The
language of $\GLP$ features modalities $\ndia{n}$, for every $n \geq 0$,
that can be arithmetically interpreted as \emph{$n$-consistency}, i.e.,
the modal formula $\ndia{n}\varphi$ expresses under this interpretation
that $\varphi$ is consistent with the set of all true
$\Pi_n$-sentences. This particular interpretation steered interest in
$\GLP$ in mainstream proof theory: in \cite{beklemishev:2004}, the
second author of this paper showed how $\GLP$ can act as a framework in
order to canonically recover an ordinal notation system for Peano
arithmetic~($\PA$) and its fragments. Moreover, based on these notions,
he obtained a rather abstract version of Gentzen's consistency proof for
$\PA$ by transfinite induction up to $\varepsilon_0$ and he formulated a
combinatorial statement independent from $\PA$~\citep{beklemishev:2006}.

This proof-theoretic analysis is based on the notion of \emph{graded
  provability algebra}. Let $T$ be an extension of $\PA$. Recall the
concept of \emph{Lindenbaum algebra of $T$}: its elements are equivalence
classes of the relation
\begin{align*}
  \varphi \sim \psi\ \defequiv\ T \proves \varphi \lequiv \psi.
\end{align*}
Let $[\varphi]$ denote the equivalence class of $\varphi$ with respect
to $\sim$. The algebra $\ca{L}_T$ is equipped with the standard Boolean
connectives and the relation
\begin{align*}
  [\varphi] \leq [\psi]\ \defequiv\ T \proves \varphi \limpl \psi.
\end{align*}
This turns $\ca{L}_T$ into a Boolean algebra,
\emph{the Lindenbaum algebra of $T$}. Thus, logical notions are brought
into an algebraic setting. The maximal element $\verum$ and the minimal
element $\falsum$ of this algebra are, respectively, the classes of all
provable and all refutable sentences of $T$ and deductively closed
extensions of $T$ correspond to \emph{filters} of $\ca{L}_T$ (see
\citep{beklemishev:2005} for details).

Let $\ndia{n}_T$ be a $\Pi_{n+1}$-formula that formalizes the notion of
$n$-consistency in arithmetic (see, e.g.,~\citep{beklemishev:2005} for a
definition of $\ndia{n}_T$). The \emph{graded provability algebra}
$\ca{M}_T$ of $T$ is the algebra $\ca{L}_T$ extended by operators
$\ndia{n}_T$ defined on the elements of $\ca{L}_T$ by
\begin{align*}
  \ndia{n}_T \colon [\varphi] \longmapsto
  [\ndia{n}_T\varphi],\quad\text{for $n \geq 0$.}
\end{align*}
Terms in the language of $\ca{M}_T$ can be identified with polymodal
formulas. Furthermore, for each sound and axiomatizable extension $T$ of
$\PA$, Japaridze's arithmetical completeness theorem for $\GLP$ states
that
\begin{align*}
  \GLP \proves \varphi(\vec{p}) \iff \ca{M}_T \models \forall
  \vec{p}\,(\varphi(\vec{p}) = \verum),
\end{align*}
where $\vec{p}$ are all the propositional variables from
$\varphi(\vec{p})$. The algebra $\ca{M}_T$ carries an additional
structure in the form of a distinguished family of subsets
\begin{align*}
  P_0 \subset P_1 \subset \cdots \subset \ca{M}_T,
\end{align*}
where $P_n$ is defined by the class of $\Pi_{n+1}$-sentences of the
arithmetical hierarchy. This family of subsets is called a
\emph{stratification of $\ca{M}_T$}~\citep{beklemishev:2004}. Since
$\ndia{n}_T$ is a $\Pi_{n+1}$-formula, the operator $\ndia{n}_T$ maps
$\ca{M}_T$ to $P_n$. The presence of a stratification thus admits to
turn $\ca{M}_T$ into a many-sorted algebra in which variables of sort
$n$ range over arithmetical $\Pi_{n+1}$-sentences. The notion of sort
can be readily extended to capture all polymodal terms. It is the goal
of this paper to investigate a modal-logical counterpart to this
many-sorted algebra.

Let us briefly comment on the general motivations for this study.  One
of the (global and ambitious) goals of relating provability algebras to
the ordinal analysis of theories was to shed more light on the
well-known and basic conceptual problem of ``natural ordinal notations''
in proof theory (see, e.g.,~\citep{Fefbug, kreisel:1977}). We
would like to understand general criteria distinguishing well-behaved
ordinal notation systems suitable for proof-theoretic analysis from the
``wild'' ones, as in Kreisel's counterexamples~\cite{kreisel:1977}.

The approach of provability algebras is an attempt to recast core
proof-theoretic results in a more abstract, essentially algebraic,
language. This amounts to introducing structures that are, on the one
hand, directly related to strong, computationally universal formal
systems, such as Peano arithmetic and its extensions. On the other hand,
from these structures one should be able to recover ordinal notation
systems in a canonical way. In other words, we consider the natural
ordinal notations problem as the question of what kind of information is
required for us to be able to speak about proof-theoretic ordinal
notation systems in a canonical way.

Within such a project it seems necessary to ``pack'' all relevant
proof-theoretic information into a suitable algebraic framework---and
the simpler this framework is the better. Basic results in the proof
theory of arithmetic can be viewed as either proofs of reflection
schemas restricted to arithmetical complexity classes $\Pi_n$, or as
$\Pi_n$-conservativity relationships between certain systems. Thus, the
stratification of the provability algebra into levels of the
arithmetical hierarchy of formulas seems to be part of the data that
necessarily has to be represented within the sought algebraic
framework. (Let us stress that, for example, introducing quantifiers in
the style of cylindric algebras would be an overkill, as we would obtain
structures that are not ``tame''.)

For example, the so-called \emph{reduction property} of provability
algebras is a key result needed for the proof-theoretic analysis of
Peano arithmetic. The most natural statement of this property in~\citep{beklemishev:2004} becomes purely algebraic only if the
stratification is part of the considered algebraic structure.\footnote{See
also~\citep{Bek17} for some generalizations of the reduction property
that can be stated without references to sorts.}

The present paper considers the most direct approach to incorporating
the stratification into the syntactic framework where the propositional
variables are assigned ``rigid'' sorts (types), for every $n \geq 0$, and
are understood as ranging over the classes of arithmetical
$\Pi_{n+1}$-sentences. The corresponding many-sorted variant of $\GLP$
will be denoted by $\msGLP$. Substitution in this logic is required to
respect the sorts of variables.\footnote{Thus, strictly speaking, our
  treatment does not yield a logic in the usual sense, since it is not
  closed under unrestricted substitutions. However, we shall use this
  term without further concern.} Our main result is a Solovay-style
arithmetical completeness theorem for $\msGLP$, i.e., for any sound and
axiomatizable extension $T$ of $\PA$ we have
\begin{align*}
  \msGLP \proves \varphi(\vec{p}) \iff \ca{M}_T \models \forall
  \vec{p}\,(\varphi(\vec{p}) = \verum),
\end{align*}
where $\vec{p}$ are all propositional variables from
$\varphi(\vec{p})$ and a quantifier binding such a variable of sort
$n$ only ranges over elements from $P_n$. In particular, we show that
the principle of \emph{$\Sigma_{n+1}$-completeness},
\begin{align*}
  \ndia{n}_Tp \limpl p
\end{align*}
(where $p$ is of sort $n$), in addition to the postulates of $\GLP$,
suffices to obtain arithmetical completeness. We observe that most of
the arguments in the proof of arithmetical completeness of $\GLP$ also
work for the sorted language. Thus, having sorted variables does not
really lead to a more complicated arithmetically complete system than
$\GLP$ itself.

A similar system has been studied by~\citet{visser:1981,Vis84} who
introduced a $\Sigma_1$-provability logic of $\PA$, i.e., in his logic,
variables are arithmetically interpreted as $\Sigma_1$-sentences (see
also~\citep{boolos:1995,ardeshir:2015}). The interpretation of
propositional variables as $\Sigma_1$-sentences also plays an important
role in the study of intuitionistic provability logic and its
variable-free fragment; see~\citep{Vis02a}.

In~\citep{Dzh94} a more flexible, yet more complicated approach is
considered, where types corresponding to $\Sigma_n$- and to
$\Pi_n$-sentences, for all $n\geq 1$, are not rigid but can be defined
using the modalities $\Sigma_n\varphi$ arithmetizing the predicate
``\emph{$\varphi$ is $\PA$-equivalent to a $\Sigma_n$-sentence}''. This
logic, however, lacks the necessary modalities $\ndia{n}$, for all
$n>0$, representing the higher reflection principles. It might be
interesting to consider the extension of $\GLP$ by modalities
$\Sigma_n$---however, at this point, it is not clear whether this system
has substantial advantages compared to the one with rigid
types.

The remainder of the paper is organized as follows. After this
introductory section, we define basic notions
in~\Cref{sec:2}. In~\Cref{sec:3}, we prove the arithmetical completeness
theorem for $\msGLP$. We continue our exposition on $\msGLP$
in~\Cref{sec:4} by proving that deciding provability in $\msGLP$ is
complete for \PSpace\ and providing a natural many-sorted
truth-provability logic. Moreover, we show that $\msGLP$ admits Craig
interpolation and study variants of $\msGLP$ that restrict the sorts and
modalities we are allowed to use. In~\Cref{sec:many}, we study a
positive variant $\msRCo$ of $\msGLP$ whose corresponding one-sorted
counterpart has recently gained attraction in the provability logic
community. In this fragment, we restrict ourselves to certain positive
formulas which allow us to focus on more general arithmetical
interpretations---variables are permitted to range over arithmetical
theories rather than single sentences. This in turn allows the
introduction of an additional modality $\ndia{\omega}$ that corresponds
to the full uniform reflection principle which has no finite, yet
recursive axiomatization. We prove that $\msRCo$ is arithmetically
complete for this interpretation.  We conclude the paper
in~\Cref{sec:5}.

\section{Preliminaries}
\label{sec:2}

\subsection{The Logics $\GLP$, $\msGLP$, and $\msJ$}

The polymodal provability logic $\GLP$ is formulated in the language of
the propositional calculus (using the connectives $\verum$, $\lneg$, and
$\land$ as primitives), enriched by unary connectives
$\ndia{0},\ndia{1},\ndia{2},\ldots$, called \emph{modalities}. Using
these connectives, formulas are built inductively in the usual way. The
dual connectives $\nbox{n}$, for every $n \geq 0$, are abbreviations
where $\nbox{n}\varphi$ stands for
$\lneg\ndia{n}\lneg\varphi$. Moreover, we abbreviate the standard
Boolean connectives using $\verum$, $\lneg$, and $\land$ in the usual
manner. 

The logic $\GLP$ is axiomatized by the following axiom schemas
and rules:\footnote{Usually, $\GLP$ is axiomatized by using $\nbox{n}$
  instead of $\ndia{n}$. However, it is more convenient for our purposes
  to use $\ndia{n}$, since we focus on $\Pi_{n+1}$-axiomatized
  concepts. Note that $\GLP$ is closed under the \emph{necessitation
    rule}: if $\GLP \proves \varphi$ then
  $\GLP \proves \nbox{n}\varphi$.}

\begin{enumerate}[label={(\roman*)}]
\item all tautologies of classical propositional logic;\label{glp:scheme:taut}
\item
  $\ndia{n}(\varphi \lor \psi) \limpl \ndia{n}\varphi \lor
  \ndia{n}\psi$;\quad$\nbox{n}\verum$;\label{glp:scheme:1}
\item $\ndia{n}\varphi \limpl \ndia{n}(\varphi \land
  \ndia{n}\lneg\varphi)$ (\emph{L\"ob's axiom});\label{glp:scheme:2}
\item $\ndia{m}\varphi \limpl \nbox{n}\ndia{m}\varphi$, for
  $m < n$;\label{glp:scheme:drop}
\item $\ndia{n}\varphi \limpl \ndia{m}\varphi$, for
  $m < n$ (\emph{monotonicity}); and \label{glp:scheme:3}
\item modus ponens and
  $\varphi \limpl \psi/\ndia{n}\varphi \limpl \ndia{n}\psi$.
\end{enumerate}

$\msGLP$ is formulated over a propositional language that contains
variables each being assigned a unique \emph{sort} $\alpha$, where
$0 \leq \alpha \leq \omega$. Let us formalize this notion more
carefully. Let $\mathbb{P}$ denote a fixed, countably infinite set of
\emph{propositional variables}. We fix a function
$\sort{\cdot} \colon \mathbb{P} \rightarrow \omega \cup \{\omega\}$ that
assigns a \emph{sort} $\alpha$ ($0 \leq \alpha \leq \omega$) to each
propositional variable $p \in \mathbb{P}$ in such a way that
$\mathbb{P}$ is partitioned into disjoint, countably infinite sets
$\mathbb{P}_0,\mathbb{P}_1,\ldots,\mathbb{P}_k,\ldots,\mathbb{P}_{\omega}$,
where
\begin{align*}
 p \in \mathbb{P}_\alpha\, \defequiv\, \sort{p} = \alpha, \quad
0 \leq \alpha \leq \omega.
\end{align*}
Formulas in this sorted language (i.e., formulas over variables
from $\mathbb{P}$) are called \emph{many-sorted formulas}. When it is
clear from context that we are dealing with many-sorted formulas, we
shall, however, often refer to them as ``formulas''.

The notion of sort is recursively extended to the set of all polymodal
formulas as follows:
\begin{itemize}
  \item $\verum$ and $\falsum$ have sort $0$;
  \item $\varphi \land \psi$ has
    sort $\max\{\alpha, \beta\}$ if $\varphi$ and $\psi$ have the
    respective sorts $\alpha$ and $\beta$;
  \item $\lneg\varphi$ has sort $1 + \alpha$ if $\varphi$
    has sort $\alpha$; and
  \item $\ndia{n}\varphi$ has sort $n$, for $n < \omega$ and any choice
    of $\varphi$.
\end{itemize}
It is easy to see that the sort of a formula is uniquely determined by
the sorts of its constituent propositional variables and we denote by
$\sort{\varphi}$ the sort of $\varphi$. The sort $\omega$ is included to
provide variables that can explicitly be assigned an arbitrary
arithmetical sentence in an arithmetical realization. In contrast,
variables of finite sort $n < \omega$ can be assigned arithmetical
$\Pi_{n+1}$-sentences only. Note that if $\sort{\varphi} = \omega$, then
also $\sort{\lneg\varphi} = \omega$. Moreover, notice that even formulas
equivalent in propositional logic may have different sorts, e.g., if $p$
has sort $n \in \omega$, then $\lneg\lneg p$ has sort $n + 2$.


\begin{definition}
The logic $\msGLP$ is axiomatized by the
schemas~\ref{glp:scheme:taut}, \ref{glp:scheme:1}, \ref{glp:scheme:2},
and~\ref{glp:scheme:3} of $\GLP$, as well as the following axiom schema:
\begin{enumerate}[resume]
\item $\ndia{n}\varphi \limpl \varphi$, if
  $\sort{\varphi} \leq n$
  (\emph{$\Sigma_{n+1}$-completeness}).\label{glp:scheme:4}
\end{enumerate}
Furthermore, $\msGLP$ is closed under modus ponens and
$\varphi \limpl \psi/\ndia{n}\varphi \limpl \ndia{n}\psi$, while
$\msGLP$ is not closed under arbitrary substitutions of formulas, but
one must rather respect the sorts of the propositional variables and
formulas involved. That is, one can only substitute formulas of sort at
most $\alpha$ for propositional variables of sort $\alpha$.
\end{definition}

Regarding the omission of axiom schema~\ref{glp:scheme:drop}, note that,
for $m < n$,
$\msGLP \proves \ndia{n}\lneg\ndia{m}\varphi \limpl
\lneg\ndia{m}\varphi$, whence
$\msGLP \proves \ndia{m}\varphi \limpl \nbox{n}\ndia{m}\varphi$ follows
by propositional logic. Hence, $\msGLP$ extends $\GLP$ in the sense
that, for any formula $\varphi$ in the language of $\GLP$, if
$\GLP \proves \varphi$, then $\msGLP \proves \varphi'$, where $\varphi'$
is obtained from $\varphi$ by arbitrarily assigning sorts to
propositional variables.

The logic $\GLP$ is not complete for any class of Kripke
frames~\citep{ignatiev:1993}. Therefore, the second author of this paper
considered in~\citep{beklemishev:2010} a weaker logic $\J$ that is
complete with respect to a natural class of Kripke frames and to which
$\GLP$ is reducible.\footnote{\citet{ignatiev:1993} also
  considered a weaker logic than $\GLP$ that is complete for a class of
  Kripke models and provided a reduction of $\GLP$ to that logic in
  order to establish arithmetical completeness. However, the
  arithmetical completeness proof in~\citep{beklemishev:2011}, where
  $\J$ is used, seems to be more convenient for our purposes.} We do so
as well and define a many-sorted counterpart $\msJ$ of $\J$ which arises
from $\msGLP$ by dropping the monotonicity axiom
schema~\ref{glp:scheme:3} and adding the schema
\begin{enumerate}[resume]
\item $\ndia{m}\ndia{n}\varphi \limpl \ndia{m}\varphi$, for
  $m < n$.\label{j:scheme:1}
\end{enumerate}
Using monotonicity (schema~\ref{glp:scheme:3}), we infer
$\msGLP \proves \ndia{m}\ndia{n}\varphi \limpl \ndia{m}\ndia{m}\varphi$,
whence by
$\msGLP \proves \ndia{m}\ndia{m} \varphi \limpl \ndia{m}\varphi$, we see
that schema~\ref{j:scheme:1} above is provable in $\msGLP$, i.e.,
$\msGLP$ extends $\msJ$. We remark that the definition of
$\J$ in~\citep{beklemishev:2010} also comprises the axiom schema

\begin{enumerate}[resume]
\item $\ndia{n}\ndia{m}\varphi \limpl \ndia{m}\varphi$, for $n > m$. \label{j:scheme:o}
\end{enumerate}
This schema is readily proved in $\msJ$ using one instance
of~\ref{glp:scheme:4}---notice that $\sort{\ndia{m}\varphi} = m$.

\begin{remark}
  We would like to emphasize that formulas in the language of $\msGLP$
  and $\msJ$ are formulated in a different language than in their
  respective one-sorted versions $\GLP$ and $\J$. Hence, formally, the
  many-sorted logics and their one-sorted counterparts talk about
  different objects. However, if we claim that a one-sorted logic proves
  a many-sorted formula, we mean that the one-sorted logic proves the
  formula which results from the many-sorted one if we simply disregard
  the sorts and treat it as a one-sorted formula in the usual sense.
\end{remark}

\subsection{Kripke Models}

A \emph{(Kripke) frame} is a structure
$\fk{F} = (W, \{R_n\}_{n\geq 0})$, where $W$ is a non-empty set of
\emph{worlds} and each $R_k$, for $k \geq 0$, is a binary relation on
$W$. The frame $\fk{F}$ is called \emph{finite} if $W$ is finite and
$R_k = \emptyset$ for all but finitely many $k \geq 0$.

A \emph{valuation $\val{\cdot}$ on} a frame $\fk{F}$ maps every
propositional variable $p$ to a subset $\val{p} \subseteq W$. A
\emph{(Kripke) model} $\fk{A} = (W, \{R_n\}_{n \geq 0}, \val{\cdot})$ is
a triple such that $\fk{F} \coloneqq (W,\{R_n\}_{n \geq 0})$ is a Kripke
frame and $\val{\cdot}$ a valuation on $\fk{F}$. We say that $\fk{A}$ is
\emph{based on} $\fk{F}$.

Given any Kripke model
$\fk{A} = (W, \{R_n\}_{n \geq 0}, \val{\cdot})$, we extend
the valuation $\val{\cdot}$ recursively to the class of all polymodal
formulas:
\begin{itemize}
  \item $\val{\verum} = W$;\quad$\val{\falsum} = \emptyset$;
  \item $\val{\psi \land \chi} = \val{\psi} \cap \val{\chi}$;
  \item $\val{\lneg\psi} = W \setminus \val{\psi}$, and
  \item $\val{\ndia{n}\psi} = \{x \in W \mid \exists y\, (x R_n y \mand y \in \val{\psi})\}$.
\end{itemize}
We often write $\fk{A}, x \models \varphi$ instead of
$x \in \val{\varphi}$. We say that $\varphi$ is \emph{valid in
  $\fk{A}$}, denoted by $\fk{A} \models \varphi$, if
$\fk{A}, x \models \varphi$, for every $x \in W$. Moreover, for a frame
$\fk{F}$, we say that $\varphi$ is \emph{valid in $\fk{F}$}, if
$\varphi$ is valid in every model based on $\fk{F}$.

A binary relation $R$ on $W$ is \emph{conversely well-founded} if there
is no infinite chain of elements of $W$ of the form $x_0Rx_1Rx_2\cdots$.
It is easy to see that, for finite $W$, this condition is equivalent to
$R$ being irreflexive. A Kripke frame
$\fk{A} = (W, \{R_n\}_{n\geq0})$ is called a
\emph{$\J$-frame}~\citep{beklemishev:2010} if
\begin{enumerate}[label={(\alph*)}]
  \item $R_k$ is conversely well-founded and transitive, for all $k \geq 0$;
  \item $\forall x, y\,(x R_n y \Rightarrow \forall z\,(x R_m z
    \Leftrightarrow y R_m z))$, for $m < n$; and\label{frame:2}
  \item $\forall x, y, z\,(x R_m y \mand y R_n z \Rightarrow x R_m z)$,
    for $m < n$.\label{frame:3}
\end{enumerate}
A \emph{$\J$-model} is a Kripke model that is based on a $\J$-frame. The
fact that the $R_k$ must be conversely well-founded and transitive is a
classical property required to validate all instances of L\"ob's axiom
(schema~\ref{glp:scheme:2}). Frame condition~\ref{frame:2}
corresponds to the schemas~\ref{j:scheme:o}
and~\ref{glp:scheme:drop}, while frame condition~\ref{frame:3}
corresponds to schema~\ref{j:scheme:1}.

\begin{theorem}[\citep{beklemishev:2010}]
  \label{thm:jcompl}
  For any polymodal formula $\varphi$, $\J \proves \varphi$ iff
  $\varphi$ is valid in all $\J$-frames.
\end{theorem}

We call a $\J$-model
$\fk{A} = (W, \{R_n\}_{n \geq 0}, \val{\cdot})$ a $\msJ$-model, if it is
\emph{strongly persistent}, that is, if it satisfies the following two conditions:
\begin{enumerate}[label={(\arabic*)}]
\item if $\sort{p} \leq n$ and $\fk{A}, y \models p$, then
  $\fk{A}, x \models p$ whenever $x R_n y$; and\label{pers:1}
\item if $\sort{p} < n$ and $\fk{A}, y \not\models p$, then
  $\fk{A}, x \not\models p$ whenever $x R_n y$. \label{pers:2}
\end{enumerate}
Note that, up to now, the notion of strong persistence is the first
semantic notion that refers to sorts of variables at all. Sorts thus
have no realization on the frame level, but are rather present through
the notion of strong persistence on the level of
models. Condition~\ref{pers:1} states that truth of propositional
variables of sort at most $n$ must be propagated downwards along
$R_n$-arcs. Likewise, condition~\ref{pers:2} states that falsehood of
propositional variables having sort (strictly) less than $n$ must be
propagated downwards along $R_n$-arcs.

Having both conditions in place allows us to extend
\ref{pers:1} and~\ref{pers:2} to all sorted formulas. This
relationship between strong persistence and satisfaction of sorted
formulas is the content of the following lemma.

\begin{lemma}\label{lemma:strongpersistence}
  Let $\fk{A} = (W, \{R_n\}_{n \geq 0}, \val{\cdot})$ be a
  $\J$-model. Then $\fk{A}$ is strongly persistent iff for all formulas
  $\varphi$ and all $n \geq 0$ we have
  \begin{itemize}
  \item if $\sort{\varphi} \leq n$, then $xR_n y$ and $\fk{A}, y \models
    {\varphi}$ imply $\fk{A}, x \models {\varphi}$; and
  \item if $\sort{\varphi} < n$, then $xR_n y$ and
    $\fk{A}, y \not\models {\varphi}$ imply
    $\fk{A}, x \not\models {\varphi}$.
  \end{itemize}
\end{lemma}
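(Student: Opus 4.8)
The plan is to prove the nontrivial ("only if") direction by a simultaneous induction on the structure of $\varphi$, establishing clauses (i) and (ii) together; the "if" direction is immediate, since propositional variables are themselves formulas, so the displayed conditions for arbitrary $\varphi$ specialize to the two defining clauses of strong persistence. The reason for treating (i) and (ii) jointly will become apparent in the negation case below.

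For the induction, I would first dispatch the base cases. If $\varphi$ is a propositional variable, the claim is exactly strong persistence. If $\varphi$ is $\verum$ or $\falsum$ (of sort $0$), both clauses hold trivially or vacuously, since $\val{\verum} = W$ and $\val{\falsum} = \emptyset$ are preserved under every $R_n$. For the Boolean connectives, I would use that $\sort{\psi \land \chi} = \sort{\psi \lor \chi} = \max\{\sort{\psi},\sort{\chi}\}$, so that a hypothesis $\sort{\varphi} \leq n$ (resp.\ $\sort{\varphi} < n$) forces the same bound on both immediate subformulas; applying the induction hypothesis clause-wise and using $\val{\psi \land \chi} = \val{\psi} \cap \val{\chi}$ and $\val{\psi \lor \chi} = \val{\psi} \cup \val{\chi}$ then closes these cases.

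The step that forces the simultaneous formulation is negation. Since $\sort{\lneg\psi} = \sort{\psi} + 1$, a hypothesis $\sort{\lneg\psi} \leq n$ becomes $\sort{\psi} < n$, so to prove clause (i) for $\lneg\psi$ I would invoke clause (ii) of the induction hypothesis for $\psi$: from $xR_ny$ and $y \forces \lneg\psi$, i.e.\ $y \notforces \psi$, clause (ii) gives $x \notforces \psi$, hence $x \forces \lneg\psi$. Symmetrically, $\sort{\lneg\psi} < n$ yields $\sort{\psi} \leq n$ (in fact $< n$), so clause (ii) for $\lneg\psi$ follows from clause (i) for $\psi$. This is precisely where proving (i) and (ii) in isolation would break down, as negation interchanges the two statements.

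Finally, for $\varphi = \ndia{m}\psi$ (of sort $m$) the argument rests on the $\msJ$-frame conditions rather than on the induction hypothesis. For clause (i), assume $m \leq n$, $xR_ny$, and $y \forces \ndia{m}\psi$, witnessed by some $z$ with $yR_mz$ and $z \forces \psi$; if $m = n$ I would use transitivity of $R_n$ to obtain $xR_mz$, and if $m < n$ I would use the frame condition $xR_ny \Rightarrow (xR_mz \Leftrightarrow yR_mz)$ in the direction $yR_mz \Rightarrow xR_mz$, in either case concluding $x \forces \ndia{m}\psi$. For clause (ii), assume $m < n$, $xR_ny$, and $x \forces \ndia{m}\psi$; the same frame condition, now in the direction $xR_mz \Rightarrow yR_mz$, transports the witness back to $y$, giving $y \forces \ndia{m}\psi$ and thereby the contrapositive of (ii). I expect the main obstacle to be purely the bookkeeping: keeping the sort thresholds straight through the negation step and pairing each subcase of the modal step ($m = n$ versus $m < n$) with the correct frame axiom; once the simultaneous induction is set up, every case is routine.
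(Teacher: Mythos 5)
Your proof is correct and is exactly the standard argument the paper has in mind when it asserts this lemma without proof (``the following facts are easy to establish''): the ``if'' direction by specializing to variables, and the ``only if'' direction by a simultaneous structural induction in which negation swaps clauses (i) and (ii) via the sort shift $\sort{\lneg\psi}=\sort{\psi}+1$, and the modal case uses transitivity of $R_n$ together with the $\msJ$-frame condition $xR_ny \Rightarrow (xR_mz \Leftrightarrow yR_mz)$ for $m<n$. All cases check out, including the correct pairing of $m=n$ with transitivity and $m<n$ with the frame condition.
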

\begin{proof}
  The proof is by induction on the structure of $\varphi$. The base case
  follows immediately by the definition of strong persistence. Assume
  $\varphi = \lneg \psi$ for some $\psi$. Suppose first that
  $\sort{\varphi} \leq n$ and $x R_n y$ such that
  $\fk{A},y \models \lneg \psi$. It follows that
  $\fk{A},y \not\models \psi$, and by
  $\sort{\psi} < \sort{\varphi} \leq n$ and the induction hypothesis, we
  infer that $\fk{A}, x \not\models \psi$, whence
  $\fk{A}, x \models \lneg \psi$ follows as required. The case where
  $\sort{\varphi} < n$ is handled similarly.

  Suppose now that $\varphi = \ndia{k}\psi$, for some $k \in
  \omega$.
  Assume that $\sort{\ndia{k}\psi} \leq n$ and let $x, y \in W$ be such
  that $x R_n y$ and $\fk{A},y \models \ndia{k}\psi$. We know that
  $\sort{\ndia{k}\psi} = k$, whence $k \leq n$ follows. Let $z \in W$ be
  such that $y R_k z$ and $\fk{A},z \models \psi$. Now frame
  condition~\ref{frame:2} and the fact that $R_k$ is transitive (for the
  case $k = n$) give us $x R_k z$, whence
  $\fk{A}, x \models \ndia{k}\psi$ follows as desired. Suppose now that
  $\sort{\ndia{k}\psi} < n$ and let $x, y \in W$ be such that $x R_n y$
  and $\fk{A},y \not\models \ndia{k} \psi$. Suppose to the contrary that
  $\fk{A},x \models \ndia{k}\psi$. Let $z \in W$ be such that $x R_k z$
  and $\fk{A},z \models \psi$. We know that $k < n$ and by frame
  condition~\ref{frame:2} we infer that $y R_k z$. Therefore,
  $\fk{A}, y \models \ndia{k}\psi$, a contradiction. Hence,
  $\fk{A},x \not\models \ndia{k}\psi$ as required.

  Suppose that $\varphi = \psi \land \chi$ for some formulas
  $\chi,\psi$. If $\sort{\varphi} \leq n$, $x R_n y$, and
  $\fk{A},y \models \varphi$, then
  $\sort{\psi},\sort{\chi} \leq \sort{\varphi}$, whence by
  $\fk{A},y \models \psi$, $\fk{A},y\models \chi$ and the induction
  hypothesis it follows that $\fk{A},x \models \varphi$, as
  required. Suppose that $\sort{\varphi} < n$. Then also
  $\sort{\psi},\sort{\chi} < n$, and if $\fk{A}, y \not\models \varphi$,
  then $\fk{A},y \not\models \psi$ or $\fk{A},y \not\models \chi$. In
  both cases, the induction hypothesis yields
  $\fk{A},x \not\models \varphi$. This finishes the case of
  conjunction.
\end{proof}

Note that, in the proof above, it is of importance that $\fk{A}$ is
indeed a $\J$-model. In particular, we require that $\fk{A}$ satisfies
frame condition~\ref{frame:2} and the fact that all $R_n$ are
transitive.

\begin{lemma}
  \label{lemma:spvalid}
  The axiom schema $\ndia{n}\varphi \limpl \varphi$ is valid in a
  $\J$-model $\fk{A}$ for all $\varphi$ such that
  $\sort{\varphi} \leq n$ iff $\fk{A}$ is strongly persistent.
\end{lemma}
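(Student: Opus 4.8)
The plan is to reduce everything to \Cref{lemma:strongpersistence}, which already upgrades the variable-level persistence conditions to arbitrary formulas. The task then splits into the two implications of the biconditional, and in each I only need to connect the semantics of $\ndia{n}$ with the appropriate persistence clause.

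For the right-to-left direction, I would assume $\fk{A}$ is strongly persistent and fix some $\varphi$ with $\sort{\varphi} \leq n$. Take any world $x$ with $x \forces \ndia{n}\varphi$. By the semantics of $\ndia{n}$, there is a world $y$ with $x R_n y$ and $y \forces \varphi$. Since $\sort{\varphi} \leq n$, clause~(i) of \Cref{lemma:strongpersistence} yields $x \forces \varphi$. As $x$ was arbitrary, $\ndia{n}\varphi \limpl \varphi$ holds at every world, hence it is valid in $\fk{A}$; and this argument works uniformly for every $n$.

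For the left-to-right direction, I would assume the scheme is valid and verify the two defining conditions of strong persistence directly for propositional variables. For the first condition, suppose $\sort{p} \leq n$, $x R_n y$, and $y \forces p$. Then $x \forces \ndia{n}p$ by the diamond semantics, and since $\sort{p} \leq n$ the valid instance $\ndia{n}p \limpl p$ gives $x \forces p$. For the second condition, suppose $\sort{p} < n$, $x R_n y$, and $y \notforces p$, i.e.\ $y \forces \lneg p$. The key observation is that $\sort{\lneg p} = \sort{p} + 1 \leq n$ precisely because $\sort{p} < n$, so $\ndia{n}\lneg p \limpl \lneg p$ is among the valid instances. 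From $x R_n y$ and $y \forces \lneg p$ we obtain $x \forces \ndia{n}\lneg p$, hence $x \forces \lneg p$, that is, $x \notforces p$.

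The only real subtlety—the point I would flag as the crux—is this sort bookkeeping in the second condition: the asymmetry between $\leq n$ in the first persistence clause and $< n$ in the second is exactly compensated by the fact that negation raises the sort by one, so that the instance of the scheme applied to $\lneg p$ is licensed precisely when $\sort{p} < n$. Everything else is a routine unfolding of the forcing definition together with an appeal to \Cref{lemma:strongpersistence}.
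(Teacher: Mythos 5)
Your proof is correct: the paper leaves this lemma without proof (it is listed among the facts that are ``easy to establish''), and your argument---using clause (i) of \Cref{lemma:strongpersistence} for the right-to-left direction and instantiating the scheme at $p$ and $\lneg p$ for the left-to-right direction, with the sort bookkeeping $\sort{\lneg p} = \sort{p}+1 \leq n$ exactly matching the strict inequality in the second persistence clause---is precisely the routine argument the authors intend.
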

\begin{proof}
  Assuming $\fk{A},x \models \ndia{n} \varphi$ gives us
  $\fk{A},y \models \varphi$ for some $y$ such that $x R_n y$, whence
  $\fk{A}, x \models \varphi$ follows by $\sort{\varphi} \leq n$ and one
  application of~\Cref{lemma:strongpersistence}.

  Conversely, if $\fk{A}$ satisfies all instances of
  $\ndia{n}\varphi \limpl \varphi$ ($\sort{\varphi} \leq n$), it
  satisfied these instances for all appropriate propositional variables
  and their negations (respecting their sorts). Hence, if
  $\sort{p} \leq n$, $\fk{A},y \models p$, and $x R_n y$, then by
  $\fk{A}, x \models \ndia{n}p \limpl p$ also $\fk{A},x \models
  p$.
  Likewise, if $\sort{p} < n$, $\fk{A}, y \not\models p$, and $x R_n y$,
  then $\fk{A}, y \models \lneg p$, whence by
  $\fk{A},x \models \ndia{n} \lneg p \limpl \lneg p$ it follows that
  $\fk{A},x \not\models p$ as needed.
\end{proof}

Our goal is now to show that $\msJ$ is sound and complete for the class
of all strongly persistent $\msJ$-models. Soundness follows by a
straightforward induction on the length of a derivation
invoking~\Cref{lemma:spvalid}. For proving completeness, we aim at a
reduction of $\msJ$ to $\J$ as detailed in the following.\footnote{The
  authors are thankful to one of the anonymous referees who pointed out
  a simplification of the completeness proof for $\msJ$.}

\subsection{Completeness of $\msJ$}

Let $\varphi$ be a many-sorted formula and let $p_1,\ldots,p_k$ exhaust
all variables from $\varphi$ and let $\alpha_1,\ldots,\alpha_k$ be their
respective sorts. Furthermore, let $\Theta \subset \omega$ be a finite
set of natural numbers. Define
\begin{align*}
  P_\Theta(\varphi) \coloneqq \bigwedge_{\mathclap{i = 1}}^k \bigwedge
  (\{\ndia{j}p_i \limpl p_i \mid j \in \Theta, j \geq \alpha_i\} \cup
  \{\ndia{j}\lneg p_i \limpl \lneg p_i \mid j \in \Theta, j >
  \alpha_i\})
\end{align*}
and
\begin{align*}
  P^+_\Theta(\varphi) \coloneqq P_\Theta(\varphi) \land
  \bigwedge_{\mathclap{j \in \Theta}}\nbox{j}P_\Theta(\varphi).
\end{align*}
If $\Theta$ consists of exactly those $n$ such that $\ndia{n}$ occurs as
a modality in $\varphi$, then we omit the subscript ``$\Theta$'' in the
expression $P^+_\Theta(\varphi)$ and write $P^+(\varphi)$ instead. A
similar convention is applied to $P_\Theta(\varphi)$. Intuitively, the
formula $P^+(\varphi)$ should ensure, when valid in a model, that the
model at hand is strongly persistent:

\begin{lemma}
  \label{lem:persistenceR}
  Suppose $\fk{A} = (W, \{R_n\}_{n \geq 0}, \val{\cdot})$ is a finite
  model such that $\fk{A} \models P_\Theta(\varphi)$, where $\Theta$
  is chosen such that $R_n \neq \emptyset$ implies $n \in \Theta$. Then
  $\fk{A}$ is strongly persistent.
\end{lemma}
\begin{proof}
  Let $\fk{A} = (W, \{R_n\}_{n \geq 0}, \val{\cdot})$ be a model and
  suppose $\fk{A} \models P_\Theta(\varphi)$. Consider any variable
  $p$ such that $\sort{p} \leq n$ and some $x, y \in W$ such that
  $\fk{A},y \models p$ and $x R_n y$. By the construction of
  $P_\Theta(\varphi)$, we know that
  $\fk{A}, x \models \ndia{n}p \limpl p$ and so $\fk{A},x \models p$ as
  required. Likewise, if $\sort{p} < n$, $\fk{A}, y \not\models p$, and
  $x R_n y$, then $P_\Theta(\varphi)$ contains the conjunct
  $\ndia{n} \lneg p \limpl \lneg p$, whence $\fk{A},x \models \lneg p$
  and thus $\fk{A}, x \not\models p$ follows.
\end{proof}

Note that the finiteness of the model in~\Cref{lem:persistenceR} is
essential, since otherwise $P^+_\Theta(\varphi)$ may not be finite.

Let $\fk{A} = (W, \{R_n\}_{n \geq 0},\val{\cdot})$ be a $\J$-model. A
\emph{root} of $\fk{A}$ is a world $r \in W$ such that for all
$x \in W$, there is a $k \geq 0$ such that $r R_k x$ or $r = x$. A model
which has a root is called \emph{rooted}.

\begin{lemma}[\citep{beklemishev:2011}]
  \label{lem:rootedjmodel}
  If $\J \nproves \varphi$, then there is a finite $\J$-model
  $\fk{A}$ with root $r$ such that $\fk{A},r \not\models
  \varphi$.
  Moreover, one can choose $\fk{A}$ such that $R_n \neq \emptyset$
  implies that $\ndia{n}$ occurs in $\varphi$.
\end{lemma}

\begin{corollary}
  \label{lem:treemodel}
  If $\msJ \nproves \varphi$, then there is a finite $\msJ$-model
  $\fk{A}$ with root $r$ such that $\fk{A},r \not\models \varphi$.
\end{corollary}
\begin{proof}
  Suppose $\msJ \nproves \varphi$. Then also
  $\J \nproves P^+(\varphi) \limpl \varphi$, since
  $\msJ \proves P^+(\varphi)$ and $\msJ$ extends $\J$. Using
  again~\Cref{lem:rootedjmodel}, we know that there is a $\J$-model
  $\fk{A} = (W, \{R_n\}_{n \geq 0}, \val{\cdot})$ with root $r$ such
  that $\fk{A},r \not\models P^+(\varphi) \limpl \varphi$. Furthermore,
  $R_n \neq \emptyset$ implies that $\ndia{n}$ occurs in
  $P^+(\varphi) \limpl \varphi$. Hence, $R_n \neq \emptyset$ also
  implies that $\ndia{n}$ occurs as modality in $\varphi$, since
  $P^+(\varphi)$ and $\varphi$ contain the same modalities. Since
  $\fk{A},r \models P^+(\varphi)$ and $r$ is the root of $\fk{A}$, we
  can infer that $\fk{A},x \models P(\varphi)$, for all $x \in
  W$.
  By~\Cref{lem:persistenceR} it follows that $\fk{A}$ is strongly
  persistent, i.e., $\fk{A}$ is a $\msJ$-model having root $r$ such that
  $\fk{A}, r \not\models \varphi$. This proves the claim.
\end{proof}

From this, the completeness of $\msJ$ for the class of $\msJ$-models
follows immediately:

\begin{corollary}
  $\msJ \proves \varphi$ iff $\varphi$ is valid in all $\msJ$-models.
\end{corollary}
\begin{proof}
  Soundness is an easy induction on the length of a
  derivation. Completeness follows immediately by~\Cref{lem:treemodel}.
\end{proof}

\subsection{Formal Arithmetic}

We consider first-order theories in the language of arithmetic. The
theories we consider are extensions of Peano arithmetic ($\PA$). The
class of \emph{$\Delta_0$-formulas} are all formulas where each
occurrence of a quantifier is of one of the forms
\begin{align*}
  \forall x \leq t\, \varphi &\coloneqq \forall x\,(x \leq t \limpl
  \varphi) \text{ or }\\
  \exists x \leq t\, \varphi &\coloneqq \exists x\,(x \leq t \land \varphi),
\end{align*}
where $t$ is a term that has no occurrence of the variable
$x$. Occurrences of such quantifiers are called \emph{bounded}, and we
often call $\Delta_0$-formulas simply \emph{bounded formulas}. The
classes of $\Sigma_n$- and \emph{$\Pi_n$-formulas} are defined
inductively as follows: $\Sigma_0$- and $\Pi_0$-formulas are the same as
$\Delta_0$-formulas. If $\varphi(\vec{x}, y)$ is a $\Pi_n$-formula, then
$\exists y\,\varphi(\vec{x}, y)$ is a
$\Sigma_{n+1}$-formula. Accordingly, if $\varphi(\vec{x}, y)$ is a
$\Sigma_n$-formula, then $\forall y\,\varphi(\vec{x}, y)$ is a
$\Pi_{n+1}$-formula. A formula is in $\Delta_{n+1}$ iff it is both in
$\Sigma_{n+1}$ and $\Pi_{n+1}$. When an arithmetical theory $T$ is
given, we often identify these classes modulo provable equivalence in
$T$. In this context, we say that a formula is \emph{$\Sigma_{n}$ in
  $T$} ($\Pi_n$, $\Delta_n$, respectively), if it is provably equivalent
to a $\Sigma_n$-formula ($\Pi_n$-formula, $\Delta_n$-formula,
respectively) in $T$.

We denote by $\ol{n}$ the \emph{$n$-th numeral} that represents the
number $n$ in our arithmetical language (when reasoning in an
arithmetical theory, we shall often write simply $n$ instead of
$\ol{n}$). We assume a standard global assignment $\code{\cdot}$ of
expressions (terms, formulas, etc.)  to natural numbers, called the
\emph{codes} of the respective expressions. When presenting formulas in
the arithmetical language, we usually write $\code{\tau}$ instead of
$\ol{\code{\tau}}$. We often consider primitive recursive families of
formulas $\varphi_n$ that depend on a parameter $n \in \omega$. In this
context, $\code{\varphi_x}$ denotes a primitive recursive definable term
with free variable $x$ whose value for a given $n$ is the G\"odel number
of $\varphi_n$. In particular, the expression $\code{\varphi(\dot{x})}$
denotes a primitive recursive definable term whose value given any $n$
is the G\"odel number of $\varphi(\ol{n})$, i.e., the G\"odel number of
the formula resulting from $\varphi$ when substituting the term $\ol{n}$
for $x$.


A theory $T$ is \emph{sound} if $T \proves \varphi$ implies
$\stdmodel \models \varphi$, for every arithmetical sentence
$\varphi$. A theory $T$ is \emph{axiomatizable} if $T$ has a
recursive set of axioms. For an axiomatizable extension $T$ of $\PA$, we
denote by $\Box_T(\alpha)$ the formula that formalizes the notion of
provability in $T$ in the usual sense.\footnote{We assume that Greek
  letters $\alpha,\beta,\ldots$ range over codes of arithmetical
  formulas.} We write $\Box_T\varphi$ instead of
$\Box_T(\code{\varphi})$. The formula $\Box_T$ defines the standard
G\"odelian provability predicate for $T$. More generally, given a
formula $\Prov(\alpha)$ with one free variable $\alpha$, we say that
$\Prov$ is a \emph{provability predicate of level $n$ over
  $T$}~\citep{ignatiev:1993}, if for all arithmetical sentences
$\varphi, \psi$:
\begin{enumerate}[label={(\alph*)}]
  \item $\Prov$ is a $\Sigma_{n+1}$-formula;
  \item $T \proves \varphi$ implies $\PA \proves \Prov(\code{\varphi})$;
  \item $\PA \proves \Prov(\code{\varphi \limpl \psi}) \limpl
    (\Prov(\code{\varphi}) \limpl \Prov(\code{\psi}))$; and
  \item if $\varphi$ is a $\Sigma_{n+1}$-sentence, then
    $\PA \proves \varphi \limpl \Prov(\code{\varphi})$ (\emph{provable
      $\Sigma_{n+1}$-completeness}).\label{sigma1compl}
\end{enumerate}

It is well-known that $\Box_T$, in its standard formulation, is a
provability predicate of level $0$.  A provability predicate $\Prov$ is
\emph{sound} if $\stdmodel \models \Prov(\code{\varphi})$ implies
$\stdmodel \models \varphi$, for every arithmetical sentence
$\varphi$. A sequence $\pi$ of formulas $\Prov_0, \Prov_1,\ldots$ is a
\emph{strong sequence of provability predicates over $T$}, if there is a
sequence $r_0 < r_1 < \cdots$ of natural numbers  such that, for all
$n \geq 0$,
\begin{itemize}
  \item $\Prov_n$ is a provability predicate of level $r_n$ over $T$; and
  \item
    $T \proves \Prov_n(\code{\varphi}) \limpl
    \Prov_{n+1}(\code{\varphi})$,
    for any arithmetical sentence $\varphi$.
\end{itemize}
We write $\nbox{n}_\pi\varphi$ for $\Prov_n(\code{\varphi})$. Moreover,
the dual of $\nbox{n}_\pi$ is defined by
$\ndia{n}_\pi\varphi \coloneqq \lneg\nbox{n}_\pi\lneg\varphi$. Given
such a sequence $\pi$, we denote by $\sort{\pi_n}$ the level of the
$n$-th provability predicate of $\pi$.

Since the provability predicate $\nbox{n}_\pi$ from $\pi$ is a
$\Sigma_k$-sentence for some $k > 0$, we can associate (in analogy to
the standard G\"odelian provability predicate) a predicate
$\Prf^\pi_n(\alpha, y)$ which expresses the statement ``\emph{$y$ codes a proof of
$\alpha$}'' and
\begin{align*}
  T \proves \Prov_n(\alpha) \lequiv \exists y \, \Prf^\pi_n(\alpha, y).
\end{align*}
We assume that $\Prf_n^\pi$ is chosen in such a way such that every number
$y$ codes a proof of at most one formula and that every provable formula
has arbitrarily long proofs.

We denote by $\True_{\Pi_n}(\alpha)$ the well-known truth-definition for
the class of all $\Pi_{n}$-sentences, i.e., $\True_{\Pi_n}(\alpha)$
expresses the statement ``\emph{$\alpha$ is the G\"odel number of a true
  arithmetical $\Pi_n$-sentence}''. The truth-definition for
$\Pi_n$-sentences serves as a basis for a natural strong sequence of
provability predicates. Let $\nbox{0}_T \coloneqq \Box_T$ and
\begin{align*}
  \nbox{n+1}_T(\alpha) \coloneqq \exists \beta\,(\True_{\Pi_n}(\beta)
  \land \Box_T(\beta \limpl \alpha)),\quad\text{for $n \geq 0$}.
\end{align*}
The formula $\nbox{n}_T$ is a provability predicate of level $n$. It
formalizes the notion of being provable in the theory
$T + \Th_{\Pi_n}(\stdmodel)$, where $\Th_{\Pi_n}(\stdmodel)$ is the set
of all true $\Pi_n$-sentences.

Another strong sequence of provability predicates is defined by
$\nbox{0}_\omega \coloneqq \Box_\PA$ and
\begin{align*}
  \nbox{n+1}_\omega \coloneqq \exists \beta\,(\forall x\,\nbox{n}_\omega
  \beta(\dot{x}) \land \nbox{n}_\omega(\forall x\,\beta(x) \limpl
  \alpha)),\quad\text{for $n \geq 0$}.
\end{align*}
The predicate $\nbox{n}_\omega$ is of level $2n$ and formalizes the
notion of ``\emph{provability by $n$ applications of the
  $\omega$-rule}''. Japaridze originally showed arithmetical
completeness of $\GLP$ for this interpretation, while completeness with
respect to the broader class of interpretations, defined by strong
sequences of provability predicates, was later established
in~\citep{ignatiev:1993}.\footnote{See~\citep{artemovbekl:2004} for a
  brief historical background.}

\subsubsection{Arithmetical Interpretation}

 An (\emph{arithmetical})
\emph{realization} is a function $f$ that maps propositional variables
to arithmetical sentences. Let $\pi$ be a strong sequence of provability
predicates over $T$. The realization $f$ is \emph{typed for $\pi$}, if,
for every propositional variable $p$, $f(p)$ is an arithmetical
$\Pi_{\sort{\pi_n} + 1}$-sentence, provided $n = \sort{p} < \omega$. (We
shall simply say that $f$ is \emph{typed} if $\pi$ is clear from
context.) Any realization $f$ can be uniquely extended to a map $f_\pi$
that is defined for all polymodal formulas as follows:
\begin{itemize}
  \item $f_\pi(\falsum) = \falsum$;\quad$f_\pi(\verum) =
    \verum$, where $\falsum$ (resp.,~$\verum$) is a convenient
    contradictory (resp.,~tautological) statement in the language of arithmetic;
  \item $f_\pi(p) = f(p)$, for any propositional variable $p$;
  \item $f_\pi(\cdot)$ commutes with the propositional
    connectives; and
  \item
    $f_\pi(\ndia{n}\varphi) = \ndia{n}_\pi {f}_\pi(\varphi)$,
    for all $n \geq 0$.
\end{itemize}
(Notice that we include the subscript $\pi$ in $f_\pi$ since $f_\pi$
depends on the choice of $\pi$ due to the fourth item above.) By some
simple closure properties of the class of $\Pi_n$-sentences, it follows
that $\sort{\varphi} = n$ implies that $f_\pi(\varphi)$ is provably
equivalent to a $\Pi_{\sort{\pi_n} + 1}$-sentence in $T$. 

$\msGLP$ is arithmetically sound for this semantics:

\begin{lemma}
  \label{lemma:glparithsound}
  Let $T$ be an axiomatizable extension of $\PA$. If
  $\msGLP \proves \varphi$, then $T \proves f_\pi(\varphi)$ for all
  arithmetical realizations $f$ that are typed for $\pi$.
\end{lemma}
\begin{proof}
  The lemma is shown by induction on the length of a proof of $\varphi$
  in $\msGLP$. Most of the axioms are clear. In particular, the
  provability of the instances of L\"ob's axiom (axiom
  schema~\ref{glp:scheme:2}) is well-known, although not trivial at all;
  see, e.g.,~\citep{boolos:1995,beklemishev:2004} for a proof. For the
  schema of $\Sigma_{n+1}$-completeness (schema~\ref{glp:scheme:4}),
  notice that $\ndia{n}\varphi \limpl \varphi$ is equivalent to
  $\lneg\varphi \limpl \nbox{n}\lneg \varphi$ in $\msGLP$. The sentence
  $\lneg f_\pi(\varphi)$ is now provably equivalent in $T$ to a
  $\Sigma_{|\pi_{n}|+1}$-sentence, whence
  $T \proves \lneg f_\pi(\varphi) \limpl \nbox{n}_\pi\lneg
  f_\pi(\varphi)$ and thus
  $T \proves \ndia{n}_\pi f_\pi(\varphi) \limpl f_\pi(\varphi)$ follows
  by provable $\Sigma_{|\pi_n| + 1}$-completeness
  (property~\ref{sigma1compl} above). The induction step, i.e., closure
  under the rules of inference, is easy to establish. We leave the
  details to the reader.
\end{proof}

Arithmetical completeness holds under the additional assumption of
soundness of the provability predicates involved. As already mentioned,
arithmetical completeness for $\GLP$ has first been established
in~\citep{japaridze:1988} and has been significantly extended and
simplified in~\citep{ignatiev:1993}. In fact, arithmetical
interpretations for variants of $\GLP$ have been pushed even further:
in~\citep{joostenf:2013}, a transfinite version $\GLP_\Lambda$ (for
$\Lambda$ a recursive ordinal) of $\GLP$ is considered, where one has a
modal operator $\nbox{\xi}$ for each $\xi < \Lambda$. The authors
of~\citep{joostenf:2013} show that $\GLP_\Lambda$ is sound and complete
for some suitable theories of second-order arithmetic
(see~\citep{joostenf:2013} for details), where $\nbox{\xi}\varphi$ is
interpreted as ``$\varphi$ is provable using an $\omega$-rule of depth
$\xi$''.

Regarding our intended arithmetical interpretation of $\GLP$,
in~\citep{beklemishev:2011}, the second author of this paper provided
yet another simplified proof for the arithmetical completeness theorem
for $\GLP$ that is close to Solovay's original construction for the
logic $\GL$~\citep{solovay:1976}. We are going to work along the lines
of the construction presented in~\citep{beklemishev:2011}, since it
seems to be the most convenient for our purpose. This is due to the fact
that, essentially, when redoing the construction for $\GLP$ carried out
in~\cite{beklemishev:2011} in the setting of $\msGLP$, we can observe
that the arithmetical realization $f$ one extracts from the fact that
$\msGLP \nproves \varphi$ is actually typed (for a previously chosen
strong sequence of provability predicates). Thus, in the next section,
we are first going to present the essentials of the arithmetical
completeness proof provided in~\citep{beklemishev:2011} and afterwards
observe that we can restrict ourselves to typed arithmetical
realizations.

\section{Arithmetical Completeness}
\label{sec:3}

Arithmetical completeness proofs usually rely on reasonable Kripke
semantics, since those proofs usually establish the following fact: if
$\varphi$ is a formula that has a Kripke model falsifying $\varphi$ in a
certain world, one can find an arithmetical realization such that the
arithmetical theory under consideration does not prove $\varphi$ under
this realization. Since $\GLP$ is, however, not complete for any class
of Kripke frames, in~\citep{beklemishev:2011}, $\GLP$ is reduced to $\J$
and then one relies on the Kripke semantics of $\J$ in order to prove
arithmetical completeness. Our strategy towards obtaining an arithmetical
completeness for $\msGLP$ is now as follows:
\begin{itemize}
\item We revisit the construction of~\citep{beklemishev:2011} and
  present the essentials contained in there. We will review all of the
  necessary information from~\citep{beklemishev:2011} needed to follow
  the new parts of the proof. For thorough details, we refer the
  interested reader to~\citep{beklemishev:2011}.
\item We observe that when this construction is carried out using
  $\msJ$-models rather than $\J$-models, we can extract an
  arithmetical realization that is actually typed (for a previously
  chosen strong sequence of provability predicates).
\end{itemize}

\subsection{Preliminary Preparations}
Before presenting the essentials of the construction
in~\citep{beklemishev:2011}, we shall introduce some additional notions.

Let $\varphi$ be a polymodal
formula. Following~\citep{beklemishev:2011}, we define auxiliary
formulas $M(\varphi)$ and $M^+(\varphi)$ as follows. Consider an
enumeration
$\ndia{m_1}\varphi_1,\ndia{m_2}\varphi_2,\ldots,\ndia{m_s}\varphi_s$ of
all subformulas of $\varphi$ of the form $\ndia{k}\psi$ and let
$n \coloneqq \max_{i \leq s} m_i$. Define
\begin{align*}
  M(\varphi) &\coloneqq \bigwedge_{\mathclap{\substack{1 \leq i \leq s \\ m_i < j
    \leq n}}}(\ndia{j}\varphi_i \limpl \ndia{m_i}\varphi_i),
\end{align*}
and, furthermore,
\begin{align*}
  M^+(\varphi) &\coloneqq M(\varphi) \land \bigwedge_{\mathclap{i \leq
      n}} \nbox{i} M(\varphi).
\end{align*}
Notice that $\msGLP \proves M^+(\varphi)$ by the use of the monotonicity
axiom schema~\ref{glp:scheme:3}.

The arithmetical completeness theorem we are going to establish
reads as follows:
\begin{theorem}
\label{th:msglpcompl}
  Let $T$ be an axiomatizable extension of $\PA$ and $\pi$ a strong
  sequence of provability predicates over $T$ whose predicates are all
  sound. Then, for all formulas $\varphi$, the following statements are
  equivalent:
  \begin{enumerate}[label={\rm (\arabic*)}]
    \item $\msGLP \proves \varphi$;\label{msglp:1}
    \item $\msJ \proves M^+(\varphi) \limpl \varphi$;\label{msglp:2}
    \item $T \proves f_\pi(\varphi)$, for all arithmetical realizations
      $f$ that are typed for $\pi$.\label{msglp:3}
  \end{enumerate}
\end{theorem}


It is clear that~\cref{msglp:2} implies~\ref{msglp:1} since
$\msGLP \proves M^+(\varphi)$ and $\msGLP$ extends $\msJ$. Moreover, we
have already established that~\ref{msglp:1} implies~\ref{msglp:3}
in~\Cref{lemma:glparithsound}. It thus remains to show
that~\ref{msglp:3} implies~\ref{msglp:2}.

Throughout the proof presented in this section, we fix an axiomatizable
extension $T$ of $\PA$ and a strong sequence of provability predicates
$\pi$ of which every provability predicate is sound. For a proof of the
arithmetical completeness theorem for $\msGLP$, we are going to argue by
contraposition and show that $\msJ \nproves M^+(\varphi) \limpl \varphi$
entails that there is a typed realization $f$ for $\pi$ such that
$T \nproves f_\pi(\varphi)$.

\subsection{Essentials of the Construction for $\GLP$} 

We fix a polymodal
formula $\varphi$ and assume that
$\msJ \nproves M^+(\varphi) \limpl \varphi$. Our goal here is to present
the essentials of the construction in~\citep{beklemishev:2011} in order
to obtain a realization $f$ such that $T \nproves
f_\pi(\varphi)$. Afterwards, we are going to show that $f$ is actually typed for
$\pi$. 

By~\Cref{lem:treemodel}, we know that there is a finite $\msJ$-model
$\fk{A} = (W, \{R'_n\}_{n \geq 0}, \val{\cdot})$ with root $r$ such that
$\fk{A}, r \models M^+(\varphi)$ and $\fk{A},r \not\models \varphi$. For
technical clarity, assume that $W = \{1,2,\ldots,N\}$ for some
$N \geq 1$ and $r = 1$. Construct a new model
$\fk{A}_0 = (W_0, \{R_n\}_{n \geq 0},\val{\cdot})$, where
\begin{itemize}
  \item $W_0 = \{0\} \cup W$;
    \item $R_0 = \{(0, x) \mid x \in W\} \cup R'_0$;
    \item $R_k = R'_k$, for $k > 0$; and
    \item $\fk{A}_0, 0 \models p\ \defequiv\ \fk{A}, 1 \models p$, for all
      variables $p$.
\end{itemize}
Notice that $\fk{A}_0$ is still a finite $\msJ$-model such that
$\fk{A}_0, r \not\models M^+(\varphi) \limpl \varphi$ ($r$ is, however,
not the root of $\fk{A}_0$ anymore). In particular, $\fk{A}_0$ is still
strongly persistent. Throughout the proof, let $m$ be the only number such
that $R_m \neq \emptyset$ and $R_k = \emptyset$, for all $k > m$.

 As in
\citep{beklemishev:2011}, we define the following auxiliary notions:
\begin{align*}
  R_k(x)  &\coloneqq \{y \mid x R_k y\}\rlap{,}\\
  R_k^\ast(x)  &\coloneqq \{y \mid y \in R_i(x), \text{ for some $i
                 \geq k$}\}\rlap{,} ~\mbox{~and}\\
  \widetilde{R}_k(x)  &\coloneqq R_k^\ast(x) \cup \bigcup\{R_k^\ast(z) \mid x
                  \in R^\ast_{k + 1}(z), z \in W_0\}\rlap{.}
\end{align*}
Note that $R_k(x) \subseteq R_k^\ast(x) \subseteq \widetilde{R}_k(x)$.
The set $\widetilde{R}_k(x)$ consists of 
\begin{enumerate*}[label={(\arabic*)}]
\item all $y$ that are $R_k^\ast$-reachable from $x$, and 
\item  all $y$ that are $R_k^\ast$-reachable from
some $z \in W_0$ such that $x$ is $R_{k+1}^\ast$-reachable from $z$.
\end{enumerate*}


The proof now proceeds by defining, for each $x \in W_0$, an
arithmetical sentence $S_x$ which expresses that a certain function
reaches a limit. More formally, suppose
$g \colon \omega \rightarrow W_0$ is a function that is coded by an
arithmetical formula $G(x, y)$ in $T$. We write $\ell^G = x$ for the
formula $\exists N_0 \forall n \geq N_0\; G(n, x)$, i.e., the formula
that expresses that $g$ reaches a \emph{limit} at point $x$. The proof
in~\citep{beklemishev:2011} relies on the construction of a sequence
$h_1,h_2,\ldots,h_m$ of functions that provably satisfy certain
properties stated in the lemma below.

 Before proceeding with the
statement of that lemma, let us clarify some notation first. Given an
arithmetical formula $\psi(x)$ and some $A \subseteq W_0$, we use
quantifier expressions of the form $\exists x \in A\ \psi(x)$,
$\forall x \in A\ \psi(x)$, etc., to respectively abbreviate finite
disjunctions $\bigvee_{x \in A} \psi(\ol{x})$ and finite conjunctions
$\bigwedge_{x \in A} \psi(\ol{x})$ over the elements of $A$; similar
conventions are employed for $\exists!x \in A\ \psi(x)$ (``\emph{there exists
exactly one $x \in A$ such that $\psi(x)$}''). When we know that
$F(\vec{x},y)$ defines a provably total function in $T$, we shall
furthermore often use expressions like $f(\vec{x}) \in A$ to abbreviate
a formula of the form $\bigvee_{y \in A} f(\vec{x}) = \ol{y}$, where $f$
is an abbreviation in the metalanguage for the function defined by
$F(\vec{x},y)$.

\begin{lemma}[\citep{beklemishev:2011}]
  \label{lem:hprops}
  There is a sequence of functions
  $h_0,h_1,\ldots,h_m \colon \omega \rightarrow W_0$ respectively
  defined by formulas $H_0,H_1,\ldots,H_m$ in $T$, i.e.,
  \begin{enumerate}[label={\rm (\arabic*)}]
  \item $T \proves \forall x\, \exists! w \in W_0\;
    H_k(x,w)$, \label{hprop:1}
  \item $T \proves \forall x,y\,(h_k(x) = y \lequiv H_k(x,y))$,
  \end{enumerate}
  such that the functions $h_0,h_1,\ldots,h_k$, provably
  in $T$, satisfy the following properties:
  \begin{align*}
    h_0(0) &= 0\text{ and } h_{k}(0) = \ell^{H_{k -1}} \text{, for $k = 1,\ldots,m$;}\\[9pt]
    h_k(n + 1) &= \left\{
        \begin{array}{l l}
          z, & \quad\text{if $h_k(n) R_k z$ and $\Prf_k(\code{\lneg
              S_z}, n)$,}\\[3pt]
          h_k(n), & \quad\text{otherwise.}
        \end{array} \right.
  \end{align*}
  Moreover, for $k = 0,1,\ldots,m$, $H_k$ is in $\Sigma_{|\pi_k| + 1}$ and
  the following properties hold:
  \begin{enumerate}[label={\rm (\arabic*)},resume]
    \item
      $T \proves \forall i, j\,\forall z \in W_0\, (i < j \land h_k(i) =
      z \limpl h_k(j) \in R_k(z) \cup \{z\})$,\label{hprop:2}
    \item $T \proves \exists!x \in W_0\ \ell^{H_k} = x$,\label{hprop:3}
    \item
      $T \proves \forall z \in W_0\,(\exists n\, h_k(n) = z \limpl
      \ell^{H_m} \in R^\ast_k(z) \cup \{z\})$.\label{hprop:4}
  \end{enumerate}
\end{lemma}
In the following, we fix a sequence of functions $h_0,h_1,\ldots,h_m$
respectively defined by formulas $H_0,H_1,\ldots,H_m$ with the
properties as stated in~\Cref{lem:hprops}. We let $S_x$ be an
abbreviation for $\ell^{H_m} = \ol{x}$.

Notice the self-referential character of the definition of the $h_k$ due
to their reference to the sentences $S_z$. \Cref{hprop:2}
of~\Cref{lem:hprops} above states that $h_k$ is weakly increasing along
$R_k$ (i.e., $h_k(n + 1)$ either has the value $h_k(n)$ or increases
with respect to $R_k$),~\cref{hprop:3} states that $h_k$ reaches a
unique limit, while~\cref{hprop:4} means that, knowing that $h_k(n) = z$
for some value $n$, we can conclude that the last function $h_m$ reaches
its limit either at $z$ or at some $x \in R_k^\ast(z)$ (this becomes
intuitively clear if we consider the fact that $h_{k+1}$ starts where
$h_k$ reaches its limit).

We give an intuitive explanation for the concepts introduced so far
using a metaphor.\footnote{The metaphor using travelers that follows is
  inspired by a similar one for the G\"odel-L\"ob logic $\GL$; see,
  e.g.,~\cite{artemovbekl:2004}.} Think of the domains of the functions
$h_k$ as being points in time, expressed via natural numbers. Moreover,
imagine that we have $m$ travelers who travel around in our model such that
the fact $h_k(n) = x$ expresses that traveler $k$ is at world
$x \in W_0$ at time instant $n$. The limit $\ell^{H_k}$ of $h_k$ can be
seen as a world where the $k$-th traveler stays indefinitely. Using this
metaphor, $h_k$ satisfies the following properties (justified
by~\Cref{lem:hprops}):
\begin{itemize}
\item Traveler $0$ starts at world $0$. Moreover, traveler $k + 1$
  starts where the $k$-th traveler stays indefinitely.
\item Traveler $k$ can only travel at time instant $n + 1$ to the world
  $z$ such that $h_k(n)R_k z$, if $n$ codes a proof that the \emph{last}
  traveler (i.e., traveler $m$) does not stay at world $z$
  indefinitely. Otherwise, she must stay at world $h_k(n)$.
\end{itemize}
Now if we consider the implicit constraints that our model under
consideration is finite and that the travelers cannot travel backwards
in our model, we would expect that honest travelers all stay at home
(i.e., at world $0$)---formally, we in particular expect that $S_0$ is
true in the standard model.

Having the notions from~\Cref{lem:hprops} in place, the use of the
relation $\widetilde{R}_k$ can be explained as follows. Assume (in $T$)
that $\ell^{H_m} = x$, where $x \neq 0$. That is, the last traveler $m$
stays in world $x$ indefinitely and $x$ is different from $0$. What can
we say about the set of worlds at which the last traveler can
$\nbox{k}_\pi$-provably stay indefinitely? Since $\ell^{H_m} \neq 0$,
one traveler has traveled at least one time from one world to
another. We certainly know that all the travelers $k, k+1,\ldots,m$
respectively travel along the relations
$R_k,R_{k+1},\ldots,R_m$. Furthermore, the $(n+1)$-st traveler starts
where the $n$-th stays indefinitely. Assuming that the $k$-th traveler
stays at $z$ indefinitely (i.e., $\ell^{H_k} = z$), we must thus have
that $\ell^{H_m} \in R^\ast_{k+1}(z) \cup \{z\}$, i.e., the last
traveler stays at some element from $R^\ast_{k+1}(z) \cup \{z\}$. This
then entails $R_k^\ast(z) \subseteq \widetilde{R}_k(x)$. Moreover,
$\ell^{H_k} = z$ implies $h(n) = z$ for some $n \geq 0$, whence
$\ell^{H_m} \in R_k^\ast(z) \cup \{z\}$ (\cref{hprop:4}
of~\Cref{lem:hprops}). But we know that $\ell^{H_m} \neq z$, since
otherwise $h_k$ could not attain the value $z$ (which is not equal to
$0$ since $x \neq 0$) at all. Therefore,
$\ell^{H_m} \in R^\ast_k(z) \subseteq \widetilde{R}_k(x)$.
Essentially, a formalization of this argument constitutes a proof
of~\cref{sprop:3} of~\Cref{lem:sprops} below. Thus, $\widetilde{R}_k(x)$
consists of all the worlds which could be ($\nbox{k}_\pi$-provably)
possible candidates for the last traveler to stay indefinitely, provided
we assume $\ell^{H_m} = x$ (i.e., $S_x$) for $x \neq 0$ in $T$.

The following lemma makes the notions discussed informally using our
metaphor more precise.

\begin{lemma}[\citep{beklemishev:2011}]
  \label{lem:sprops}
  The sentences $S_x$ satisfy the following properties:
  \begin{enumerate}[label={\rm (S\arabic*)}]
  \item $T \proves \bigvee_{x \in W_0} S_x$ and
    $T \proves \lneg(S_x \land S_y)$, for $x \neq y$;\label{sprop:1}
    \item $T \proves S_x \limpl \ndia{k}_\pi S_y$, for all $y$ such that
      $xR_ky$;\label{sprop:2}
    \item $T \proves S_x \limpl \nbox{k}_\pi(\bigvee_{y \in \widetilde{R}_k(x)} S_y)$,
      for all $x \neq 0$; and\label{sprop:3}
    \item $\stdmodel \models S_0$.\label{sprop:4}
  \end{enumerate}
\end{lemma}
\begin{proof}
  For the sake of clarity, let us repeat some parts of the proof
  from~\citep{beklemishev:2011}. \Cref{sprop:1} states that $h_m$
  reaches its limit at one and only one world in the model
  $\fk{A}_0$. Notice that~\ref{sprop:1} follows immediately
  by~\cref{hprop:1} of ~\Cref{lem:hprops}.

  \Cref{sprop:2} expresses the
  fact that, assuming $S_x$ in $T$, for all $y$ such that $x R_k y$, one
  can consistently assume (regarding the $k$-th provability predicate of
  $\pi$) that $h_m$ converges to $y$. One can prove this item by
  formalizing the following argument in $T$:
\begin{quote}
  Assume $S_x$ and $\nbox{k}_\pi \lneg S_y$ for some $y$ such that
  $x R_k y$. Then either $\ell^{H_k} = x$ or $\ell^{H_k} = z$, for some
  $z \in R^\ast_{k+1}(x)$. In both cases, since $\fk{A}_0$ is a
  $\msJ$-model, we have that $R_k(x) = R_k(\ell^{H_k})$. Pick a number
  $n_0$ such that $\forall n \geq n_0\ h_k(n) = \ell^{H_k}$. Since
  $\nbox{k}_\pi \lneg S_y$, there is an $n_1 \geq n_0$ such that
  $\Prf^\pi_k(\code{\lneg S_y}, n_1)$. But $\ell^{H_k} R_k y$ and
  $h_k(n_1) = \ell^{H_k}$, so by definition of $h_k$ we obtain
  $h_k(n_1+1) = y \neq \ell^{H_k}$, a contradiction. Thus,
  $\lneg\nbox{k}_\pi\lneg S_y$, which is equivalent to $\ndia{k}_\pi S_y$.
\end{quote}

  For \cref{sprop:3}, we formalize the following argument in $T$:
\begin{quote}
  Assume $S_x$, where $x \neq 0$, and assume $\ell^{H_k} = z$. By the
  construction of the functions $h_k$, we know that
  $x \in R_{k+1}(z) \cup \{z\}$. By the definition of $\widetilde{R}_k$, this
  implies $R^\ast_k(z) \subseteq \widetilde{R}_k(x)$. Since we can define this
  property by a $\Delta_0$-formula, we know
  $\nbox{k}_\pi(R_k^\ast(z) \subseteq \widetilde{R}_k(x))$. Hence,
  $\nbox{k}_\pi(\bigvee_{y \in R^\ast_k(z)} S_y)$ implies
  $\nbox{k}_\pi (\bigvee_{y \in \widetilde{R}_k(x)} S_y).$ Moreover, since
  $\ell^{H_k} = z$, we must have $\exists n\, h(n) = z$. The latter
  statement is definable by a $\Sigma_{|\pi_k| + 1}$-formula, whence
  $\nbox{k}_\pi (\exists n\, h(n) = z)$. By~\cref{hprop:4}
  of~\Cref{lem:hprops}, we know that, for any $u \in W_0$,
  \begin{align*}
    \exists n\,h_k(n) = u\ \implies\ \ell^{H_m} \in R^\ast_k(z) \cup \{z\},
  \end{align*}
  whence
  \begin{align*}
     \nbox{k}_\pi (\exists n\,h_k(n) = u)\ \implies\ \nbox{k}_\pi(\ell^{H_m} \in R^\ast_k(z) \cup \{z\}).
  \end{align*}
  For $u = z$, we thus obtain
  $\nbox{k}_\pi(\ell^{H_m} \in R^\ast_k(z) \cup \{z\})$. Now we observe
  that $x \neq 0$ implies $z \neq 0$ and, by construction of $h_k$, we
  infer $\nbox{k}_\pi\lneg S_z$. Therefore,
  $\nbox{k}_\pi(\ell^{H_m} \in R^\ast_k(z))$, i.e.,
  $\nbox{k}_\pi(\bigvee_{y \in R^\ast_k(z)} S_y)$. We observed above
  that this implies $\nbox{k}_\pi(\bigvee_{y \in \widetilde{R}_k(x)} S_y)$, and thus
  the proof is finished.
\end{quote}

\Cref{sprop:4} can be proved by showing, using an external induction on
$k$, that $\stdmodel \models \ell^{H_k} = 0$ for all $k \geq 0$. There,
one uses the soundness of $\nbox{k}_\pi$: if $\ell^{H_k} = z \neq 0$,
then $\nbox{k}_\pi \lneg S_z$, since by induction hypothesis we have
$h_k(0) = \ell^{H_{k-1}} = 0$. Since $\nbox{k}_\pi$ is sound, it follows
that $\ell^{H_k} \neq z$. Hence, $\ell^{H_k} = 0$.
\end{proof}

Now we define an arithmetical realization $f$ by
\begin{align*}
  f \colon p \longmapsto \bigvee_{\fk{A}_0, x \models p} S_x.
\end{align*}
In~\citep{beklemishev:2011}, the following ``commutation lemma'' is
shown---recall that we fixed $\varphi$ in the beginning of our proof:
\begin{lemma}
  \label{lem:commutation}
  For every subformula $\theta$ of $\varphi$ and each
  $x \in W_0 \setminus \{0\}$:
  \begin{itemize}
  \item $\fk{A}_0, x \models \theta$ implies
    $T \proves S_x \limpl f_\pi(\theta)$;
  \item $\fk{A}_0, x \not\models \theta$ implies
    $T \proves S_x \limpl \lneg f_\pi(\theta)$.
  \end{itemize}
\end{lemma}

Using this lemma, we can conclude $T \nproves f_\pi(\varphi)$ as
follows. If we had $T \proves f_\pi(\varphi)$, then, since
$\fk{A}_0, r \not\models \varphi$, we obtain $T \proves \lneg
S_1$. Thus, $T \proves \nbox{0}_\pi S_1$ and since $0R_0 r$,
using~\ref{sprop:2}, we obtain $T \proves \lneg S_0$. By the soundness
of $T$, this implies $\stdmodel \models \lneg S_0$,
contradicting~\ref{sprop:4}. Therefore, $T \nproves f_\pi(\varphi)$ as
required.

\subsection{The Realization $f$ is Typed for $\pi$} 

We now prove, using the assumption that $\fk{A}_0$ is strongly
persistent, that $f$ is actually typed for $\pi$ which will then
conclude the arithmetical completeness proof for $\msGLP$. When
reasoning in $T$, we shall often treat $\ell^{H_i}$ ($i = 0,1,\ldots,m$)
as a world and write $\fk{A}_0, \ell^{H_i} \models p$ as an abbreviation
for the fact that, provably in $T$, $\ell^{H_i} = \ol{u}$ holds for some
$u$ such that $\fk{A}_0, u \models p$.

\begin{lemma}
    \label{lem:limitsprop}
    For all $k < m$, provably in $T$, if $k < n \leq m$, then either
    $\ell^{H_k} = \ell^{H_n}$ or $\ell^{H_k} R_j \ell^{H_n}$, for some $j \in (k, n]$.
\end{lemma}
\begin{proof}[Sketch]
  We can easily conclude from~\Cref{lem:hprops} that, for $k \geq 0$,
  either $\ell^{H_k} = \ell^{H_{k+1}}$, or
  $\ell^{H_k} R_{k + 1} \ell^{H_{k + 1}}$. Using this property, the
  claim now follows easily by an external induction on~$k$.
\end{proof}

\begin{lemma}
  \label{lem:mylemma}
  For any variable $p$ of sort $k \leq m$, provably in $T$,
  \begin{align*}
    f(p) \iff\forall w \in W_0 \setminus \val{p}\ \forall x\, \lneg H_k(x, w).
  \end{align*}
\end{lemma}
\begin{proof}
  For the direction from left to right, we reason in $T$ as
  follows. Assume $f(p)$ and, towards a contradiction, suppose that
  $\exists x\, h_k(x) = w$ for some $w \in W_0$ such that
  $\fk{A}_0, w \not\models p$. By~\cref{hprop:4} of~\Cref{lem:hprops},
  we know that, provably in $T$, $\exists x\, h_k(x) = w$ implies
  \begin{align*}
    S_w \lor \bigvee_{\mathclap{u \in R^\ast_k(w)}} S_u\rlap{.}
  \end{align*}
  Since $\fk{A}_0$ is strongly persistent and
  $\fk{A}_0, w \not\models p$, we know that $\fk{A}_0, u \not\models p$
  for all $u \in R_k^\ast(w)$. This contradicts $f(p)$
  by~\cref{sprop:1} of~\Cref{lem:sprops}.

  For the other direction, we reason in $T$ as follows. Assume the
  right-hand side of the equivalence. We certainly know that
  $\ell^{H_k} \neq u$ for all $u \in W_0$ such that
  $\fk{A}_0, u \not\models p$. Now, if $\ell^{H_k} = \ell^{H_m}$, then,
  by~\ref{sprop:1}, $S_x$ holds for some $x \in W_0$ such that
  $\fk{A}_0 ,x \models p$ and we are thus finished. So suppose that
  $\ell^{H_k} \neq \ell^{H_m}$. We know that $\fk{A}_0, \ell^{H_k} \models {p}$,
  since $\forall x\, h_k(x) \neq w$ for all $w \in W_0$ such that
  $\fk{A}_0, w \not\models {p}$.  Assume now that
  $\fk{A}_0,\ell^{H_m} \not\models p$.  By~\Cref{lem:limitsprop}
  there must be a $j \in (k, m]$ such that $\ell^{H_k} R_j
  \ell^{H_m}$.
  By strong persistence, for any $x, y \in W_0$ such that $x R_j y$, it
  holds that $\fk{A}_0, y \not\models p$ implies that
  $\fk{A}_0, x \not\models p$. Thus,
  $\fk{A}_0, \ell^{H_m} \not\models p$ is impossible and therefore
  $\fk{A}_0, \ell^{H_m} \models {p}$ by~\cref{sprop:1} of~\Cref{lem:sprops}.
\end{proof}

\begin{lemma}
  \label{lem:typed}
  For every variable $p$ of sort $k < \omega$, $f(p)$ is
  $\Pi_{|\pi_k| + 1}$ in $T$.
\end{lemma}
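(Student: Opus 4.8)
The plan is to reduce the statement, via \Cref{lemma:mylemma}, to a complexity estimate for the graph formulas $H_k$ of the Solovay functions, and to prove that estimate by induction on $k$. Write $r_k \coloneqq \sort{\pi_k}$. Since $\Prov_k = \nbox{k}_\pi$ is a $\Sigma_{r_k + 1}$-formula, the associated proof predicate $\Prf_k$, for which $T \proves \Prov_k(\alpha) \lequiv \exists y\, \Prf_k(\alpha, y)$, may be taken to be $\Pi_{r_k}$. The estimate I aim to establish is that, provably in $T$ and for every $k \leq m$,
\begin{enumerate}
  \item $H_k(x, y)$ is $\Sigma_{r_k + 1}$; and
  \item $\ell_k = \ol{v}$ is $\Delta_{r_k + 2}$, for each $v \in W_0$.
\end{enumerate}
I would prove (i) and (ii) by a simultaneous induction on $k$.

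For (i), I would unfold $H_k(x, y)$ as the existence of a finite history $s$ coding the values $h_k(0), \ldots, h_k(x)$: the initial value is $0$ if $k = 0$ and $\ell_{k-1}$ if $k > 0$, the final value is $y$, and each transition obeys the defining clause of $h_k$, namely a move along $R_k$ to some $z$ when $\Prf_k(\code{\lneg S_z}, i)$ holds and no change otherwise. As $W_0$ and $R_k$ are finite, the move condition at a single step is a finite disjunction of instances of $\Prf_k$, hence $\Pi_{r_k}$, and its negation is $\Sigma_{r_k}$; the initial clause is $\Delta_0$ when $k = 0$ and, by the induction hypothesis (ii), lies in $\Delta_{r_{k-1} + 2} \subseteq \Delta_{r_k + 1}$ when $k > 0$ (using $r_{k-1} + 1 \leq r_k$). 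Guarded by the bounded quantifier $\forall i < x$, the body is a Boolean combination of $\Sigma_{r_k}$- and $\Pi_{r_k}$-formulas and therefore lies in $\Delta_{r_k + 1}$; prefixing the single history quantifier $\exists s$ then yields a $\Sigma_{r_k + 1}$-formula, and each step is formalizable in $T$.

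For (ii), since $h_k$ provably stabilizes (\Cref{lemma:hprop}), $T$ proves
\begin{align*}
  \ell_k = \ol{v}\ \lequiv\ \exists N\,\bigl(H_k(N, \ol{v}) \land \forall
  n \geq N\, \lneg\exists z\,(\ol{v} R_k z \land \Prf_k(\code{\lneg
  S_z}, n))\bigr)\rlap{.}
\end{align*}
The first conjunct is $\Sigma_{r_k + 1}$ by (i) and the second is a $\forall n$ over a $\Sigma_{r_k}$-formula, hence $\Pi_{r_k + 1}$, so the body is $\Delta_{r_k + 2}$ and $\ell_k = \ol{v}$ is $\Sigma_{r_k + 2}$; by the provable uniqueness of the limit (\cref{lemma:hprop:2}) it is also equivalent to $\bigwedge_{v' \neq v} \lneg(\ell_k = \ol{v'})$, which is $\Pi_{r_k + 2}$, whence $\Delta_{r_k + 2}$. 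With this estimate the two cases are short. If $k \leq m$, then \Cref{lemma:mylemma} gives $T \proves f_\pi(p^k) \lequiv \bigwedge_{w \in W_0 \setminus \val{p}} \forall x\, \lneg H_k(x, \ol{w})$; by (i) each conjunct is $\Pi_{r_k + 1}$, and the finite conjunction over $W_0$ remains $\Pi_{r_k + 1}$. If $k > m$, there is no Solovay function $h_k$ and \Cref{lemma:mylemma} no longer applies; instead $f_\pi(p^k) = \bigvee_{x \forces p} S_x$ is a finite disjunction of the limit sentences $S_x = (\ell_m = \ol{x})$, each $\Delta_{r_m + 2}$ by (ii), and since $k > m$ forces $r_k \geq r_m + 1$ we have $r_m + 2 \leq r_k + 1$, so $\Delta_{r_m + 2} \subseteq \Pi_{r_k + 1}$ and the disjunction stays $\Pi_{r_k + 1}$.

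The hard part is the bookkeeping in (ii): the naive reading $\ell_k = \ol{v} \lequiv \exists N_0\, \forall n \geq N_0\, H_k(n, \ol{v})$ only delivers $\Sigma_{r_k + 3}$, which is too weak in the case $k = m + 1$, where one knows merely $r_k \geq r_m + 1$. Shaving the bound down to $\Delta_{r_k + 2}$ relies on the ``no-move'' reformulation above, which in turn uses the provable stabilization and uniqueness of the Solovay limit; throughout one must check that the complexity manipulations are carried out inside $T$ rather than merely in the standard model.
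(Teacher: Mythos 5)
Your proposal is correct and follows essentially the same route as the paper: the case split $k \leq m$ versus $k > m$, the appeal to \Cref{lemma:mylemma} in the first case, and the use of the provable uniqueness of the Solovay limit to convert $\Sigma$-bounds into $\Pi$-bounds in the second are all exactly the paper's argument. The only divergence is in the bookkeeping for $H_k$: the paper upgrades $H_k$ from $\Sigma_{\sort{\pi_k}+1}$ to $\Delta_{\sort{\pi_k}+1}$ directly via $T \proves \forall x\, \exists! y\, H_k(x,y)$ and finiteness of $W_0$ (so that the naive reading of $\ell_k = \ol{v}$ already lands in $\Sigma_{\sort{\pi_k}+2}$), whereas you keep only the $\Sigma_{\sort{\pi_k}+1}$ bound on $H_k$ and recover the loss through the ``no-move'' reformulation of the limit statement; both devices are valid and yield the same final estimates.
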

\begin{proof}
  Recall that, according to~\Cref{lem:hprops}, $H_k(x,y)$ is
  $\Sigma_{|\pi_k| + 1}$ in $T$.
  We remind the reader that $f(p)$ is the disjunction of all $S_x$ such
  that $\fk{A}_0,x \models p$. Observe that $S_x$ is by construction
  $\Sigma_{|\pi_m| + 2}$ in $T$ and hence so is $f(p)$. Moreover, recall
  that $m$ is the only number such that $R_m \neq \emptyset$ and
  $R_k = \emptyset$ for all $k > m$.

  Suppose first that $k > m$. Then $|\pi_m| + 2 \leq |\pi_k| + 1$ and so
  $f(p)$ is also $\Sigma_{|\pi_k| + 1}$ in $T$. Moreover,
  using~\cref{hprop:1} of~\Cref{lem:hprops}, we observe that, provably
  in $T$,
  \begin{align*}
    f(p)\ \iff\ \bigvee_{\mathclap{\fk{A}_0,x \models p}} S_x\ \iff\ \bigwedge_{\mathclap{\fk{A}_0,x \not\models p}} \lneg S_x.
  \end{align*}
  The sentences $\lneg S_x$ are $\Pi_{|\pi_k| + 1}$ in $T$ and thus
  $f(p)$ is $\Pi_{|\pi_k| + 1}$ in $T$ as well.

  Suppose now that $k \leq m$. Recall that $H_k(x,y)$ is
  $\Sigma_{|\pi_k| + 1}$ in $T$ and therefore $\lneg H_k(x,y)$ is
  $\Pi_{|\pi_k| + 1}$ in $T$.
  %
  %
   By~\Cref{lem:mylemma} we know that,
  provably in $T$,
  \begin{align*}
    f(p) \iff \forall w \in W_0 \setminus \val{p}\ \forall x\, \lneg H_k(x, w).
  \end{align*}
  Since $\lneg H_k(x,y)$ is $\Pi_{|\pi_k| + 1}$ in $T$ and since
  $\Pi_{|\pi_k|+1}$-formulas are closed under universal quantification,
  $f(p)$ is $\Pi_{|\pi_k| + 1}$ in $T$.
\end{proof}

Now~\Cref{lem:typed} implies that the realization $f$ is actually typed
for $\pi$. This concludes the proof of the arithmetical completeness
theorem (\Cref{th:msglpcompl}) for $\msGLP$.

\section{Some Further Results on $\msGLP$}
\label{sec:4}

In this section, we briefly establish some further results on $\msGLP$
that mostly rely on results previously obtained for $\GLP$.

\subsection{Truth Provability Logic} 

Let $\mathsf{GLPS}$ denote the extension of the set of theorems of GLP by the schema $\varphi \limpl \ndia{n}\varphi$, for all formulas $\varphi$ and all $n < \omega$, and with modus ponens as a sole rule of inference. It turns out that the theorems of GLPS are exactly those modal formulas that are true in the standard model of arithmetic under every arithmetical realisation 
(see~\citep{beklemishev:2011}). The methods
above can be easily extended to characterize a many-sorted analogue of
$\mathsf{GLPS}$, which we denote by $\msGLPS$. More precisely, let
$\msGLPS$ denote the logic consisting of the set of theorems of $\msGLP$
extended by the schema $\varphi \limpl \ndia{n}\varphi$ ($n \geq 0$) and
with modus ponens as its sole rule of inference. 

Let
$\ndia{n_1}\varphi_1,\ldots,\ndia{n_s}\varphi_s$ be an enumeration of
all subformulas from $\varphi$ of the form $\ndia{k}\psi$. Furthermore,
let
\begin{align*}
  U(\varphi) \coloneqq \bigwedge_{\mathclap{i = 1}}^s (\varphi_i \limpl
  \ndia{n_i}\varphi_i)\rlap{.}
\end{align*}
Then the following is a straightforward adaption of a similar result
from~\citep{beklemishev:2011} for $\mathsf{GLPS}$:

\begin{theorem}
  Let $T$ be a sound axiomatizable extension of $\PA$ and $\pi$ a strong
  sequence of provability predicates over $T$ of which every provability
  predicate is sound. Then, for all many-sorted formulas $\varphi$, the
  following statements are equivalent:
  \begin{enumerate}[label={\rm (\arabic*)}]
    \item $\msGLPS \proves \varphi$;\label{it:glps1}
    \item $\msGLP \proves U(\varphi) \limpl \varphi$; and\label{it:glps2}
    \item $\stdmodel \models f_\pi(\varphi)$, for all realizations $f$
      that are typed for $\pi$.\label{it:glps3}
  \end{enumerate}
\end{theorem}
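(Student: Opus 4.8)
The plan is to prove the three-way equivalence for $\msGLPS$ by mirroring the structure of the already-established \Cref{theorem:glparithcomplete}, adapting the reduction-plus-Solovay strategy to the truth semantics. The crucial structural difference is that $\msGLPS$ adds the \emph{reflection-type} scheme $\varphi \limpl \ndia{n}\varphi$ and drops all rules except modus ponens, so necessitation is no longer available; correspondingly, the arithmetical side moves from provability in $T$ to \emph{truth in $\stdmodel$}. The auxiliary formula $H(\varphi)$ plays the same role for this reduction that $M^+(\varphi)$ played before: it internalizes the extra reflection instances needed to recover $\msGLPS$-reasoning inside $\msGLP$.

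First I would dispatch the two cheap directions. The implication from~\ref{it:glps2} to~\ref{it:glps1} is immediate, since $\msGLPS$ contains all theorems of $\msGLP$, contains the scheme $\varphi \limpl \ndia{n}\varphi$ from which $\msGLPS \proves H(\varphi)$ follows, and is closed under modus ponens; thus $\msGLP \proves H(\varphi) \limpl \varphi$ yields $\msGLPS \proves \varphi$. The implication from~\ref{it:glps1} to~\ref{it:glps3} is the soundness direction: one argues by induction on an $\msGLPS$-derivation that every theorem is true in $\stdmodel$ under every typed realization $f_\pi$. Here the $\msGLP$-theorems are handled via \Cref{lemma:glparithsound} together with the soundness of $T$ (giving truth in $\stdmodel$), while the new axiom $\varphi \limpl \ndia{n}\varphi$ is validated precisely by the assumed \emph{soundness of the provability predicates} $\nbox{n}_\pi$: if $\stdmodel \models \hat f_\pi(\varphi)$ then, since $\ndia{n}_\pi$ is sound, $\stdmodel \models \ndia{n}_\pi \hat f_\pi(\varphi) = \hat f_\pi(\ndia{n}\varphi)$, i.e.\ a true sentence is $n$-consistent with $T$ in the standard model. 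Modus ponens is trivially truth-preserving.

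The substantive direction is~\ref{it:glps3}$\,\Rightarrow\,$\ref{it:glps2}, proved by contraposition: assume $\msGLP \nproves H(\varphi) \limpl \varphi$ and produce a typed realization $f_\pi$ with $\stdmodel \nmodels \hat f_\pi(\varphi)$. By \Cref{theorem:glparithcomplete} (using the equivalence of~\ref{th:glpcompl:1} and~\ref{th:glpcompl:2} there) the failure of $\msGLP \proves H(\varphi) \limpl \varphi$ gives a failure of $\msJ \proves M^+(H(\varphi)\limpl\varphi) \limpl (H(\varphi)\limpl\varphi)$, hence by \Cref{lemma:jhasroot} a finite, strongly persistent, rooted $\msJ$-model $\fk{A}$ with root $r$ in which $H(\varphi)$ holds at $r$ while $\varphi$ fails at $r$. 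I would then run the \emph{same} Solovay construction as in the proof of \Cref{theorem:glparithcomplete} on this model, obtaining sentences $S_x$, the functions $h_n$, and the realization $f_\pi \colon p \mapsto \bigvee_{x \forces p} S_x$; \Cref{lemma:sprop,lemma:hprop,lemma:limitsprop} carry over verbatim, and the verification that $f_\pi$ is typed is identical to the argument already given. The key new point is that, because we are now measuring \emph{truth} rather than provability, we evaluate the commutation lemma at the root $r$ itself rather than at the auxiliary world $0$: by \cref{lemma:sprop:4} of \Cref{lemma:sprop} we have $\stdmodel \models S_0$, and the limit behaviour of the Solovay functions together with $\fk{A},r \not\models \varphi$ forces $\stdmodel \not\models \hat f_\pi(\varphi)$.

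The main obstacle is making precise why the passage from $\msJ$-falsification of $M^+(H(\varphi)\limpl\varphi)\limpl(H(\varphi)\limpl\varphi)$ correctly feeds into a standard-model failure of $\hat f_\pi(\varphi)$, i.e.\ bookkeeping the interaction between the two auxiliary formulas $M^+$ and $H$. One must check that the model delivered by \Cref{lemma:jhasroot} genuinely satisfies $H(\varphi)$ at its root (so that the extra reflection instances are locally available) while still falsifying $\varphi$, and that the Solovay realization transports this into $\stdmodel \not\models \hat f_\pi(\varphi)$ rather than merely into $T \nproves \hat f_\pi(\varphi)$. The resolution is exactly the shift, noted above, from the world $0$ to the root $r$ and the appeal to $\stdmodel \models S_0$: this is the single place where the truth semantics, as opposed to the provability semantics of \Cref{theorem:glparithcomplete}, enters, and everything else is a direct transcription of the earlier construction following~\citep{beklemishev:2011}.
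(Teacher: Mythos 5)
Your overall architecture is the intended one (the paper itself only says the theorem ``can be proved as in \citep{beklemishev:2011}''), and the two easy directions are handled correctly: \ref{it:glps2}$\Rightarrow$\ref{it:glps1} via $\msGLPS \proves H(\varphi)$ and modus ponens, and \ref{it:glps1}$\Rightarrow$\ref{it:glps3} via \Cref{lemma:glparithsound}, soundness of $T$, and soundness of the predicates $\nbox{n}_\pi$ to validate $\varphi \limpl \ndia{n}\varphi$. The reduction of \ref{it:glps3}$\Rightarrow$\ref{it:glps2} to a rooted, strongly persistent $\msJ$-countermodel of $M^+(H(\varphi)\limpl\varphi) \limpl (H(\varphi)\limpl\varphi)$ is also the right move.

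The gap is in the last step. The commutation lemma from the proof of \Cref{theorem:glparithcomplete} only gives, for worlds $x \neq 0$, that $T \proves S_x \limpl \hat{f}_\pi(\chi)$ or $T \proves S_x \limpl \lneg\hat{f}_\pi(\chi)$ according to whether $x \forces \chi$. From $\fk{A}, r \not\models \varphi$ this yields $T \proves S_r \limpl \lneg\hat{f}_\pi(\varphi)$, which is useless for concluding $\stdmodel \not\models \hat{f}_\pi(\varphi)$, because the sentence true in $\stdmodel$ is $S_0$, not $S_r$. What is actually needed is a separate \emph{truth lemma}: for every subformula $\chi$ of $\varphi$, $\stdmodel \models \hat{f}_\pi(\chi)$ iff $\fk{A}, r \models \chi$. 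Its atomic and Boolean cases do follow from $\stdmodel \models S_0$ and the fact that $0$ copies the valuation of $r$, but the modal cases are genuinely new: the world $0$ has $R_n$-successors only for $n = 0$, so the Kripke clause at $0$ and the arithmetical truth of $\ndia{n}_\pi\hat{f}_\pi(\psi)$ do not match, and one must invoke $H(\varphi)$ at $r$ (to pass between $r \models \psi$ and $r \models \ndia{n}\psi$) together with further arithmetical facts about the Solovay sentences in the standard model --- e.g.\ that all the limits $\ell_k$ equal $0$ in $\stdmodel$ and that $T \proves \nbox{0}_\pi\bigl(\bigvee_{x \neq 0} S_x\bigr)$. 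Your proposal identifies this as ``the main obstacle'' but then asserts that it is resolved merely by ``the shift to the root $r$ and the appeal to $\stdmodel \models S_0$''; that does not address the modal cases, which are the only place $H(\varphi)$ enters the argument at all, and the claim that ``everything else is a direct transcription'' is therefore not accurate. To complete the proof you must state and prove this truth lemma by induction on subformulas, making the use of $H(\varphi)$ explicit in the $\ndia{n}$ cases.
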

\begin{proof}[Sketch]
  The implications from~\ref{it:glps1} to~\ref{it:glps3} and
  from~\ref{it:glps2} to~\ref{it:glps1} are easy to establish---observe
  that $\msGLPS \proves U(\varphi)$. We sketch the direction
  from~\ref{it:glps3} to~\ref{it:glps2} again by citing results
  from~\citep{beklemishev:2011}. Suppose
  $\msGLP \nproves U(\varphi) \limpl \varphi$. As in the arithmetical
  completeness proof for $\msGLP$, we can construct a finite rooted
  $\msJ$-model $\fk{A}_0$ with root $0$ such that
  $\fk{A}_0,0 \models M^+(\varphi) \land U(\varphi)$ and
  $\fk{A}_0,0 \not\models \varphi$, i.e., $\fk{A}_0$ is constructed as
  in the arithmetical completeness proof for $\msGLP$, with the only
  difference that $U(\varphi)$ is satisfied at world $0$. We can
  construct the functions $h_k$ based on $\fk{A}_0$ and the sentences
  $S_x$ in a similar spirit as in the arithmetical completeness proof of
  $\msGLP$. \Cref{lem:sprops} then holds without any changes.

  We can
  then strengthen \Cref{lem:commutation} and prove that, for every
  subformula $\theta$ of $\varphi$,
  \begin{itemize}
    \item $\fk{A}_0, 0 \models \theta$ implies $T \proves S_0 \limpl f_\pi(\theta)$;
    \item $\fk{A}_0, 0 \not\models \theta$ implies $T \proves S_0 \limpl \lneg f_\pi(\theta)$.
  \end{itemize}
  (For a proof of this result, we refer the reader
  to~\citep{beklemishev:2011}.) The proof of the fact that the
  realization $f$ is actually typed also holds without any changes. Now
  $\stdmodel \models S_0$ (\cref{sprop:4} of~\Cref{lem:sprops}) gives us
  $\stdmodel \not\models f_\pi(\varphi)$.
\end{proof}

\subsection{Reducing $\msGLP$ to $\GLP$} 

For the results contained in the
remainder of this section, we will rely on a reduction of $\msGLP$ to
$\GLP$ which we shall present here.

We first borrow some notions from~\citep{beklemishevjoost:2014} used to
reduce $\GLP$ to $\J$. Let $\varphi$ be a polymodal formula and let
$\ndia{m_1}\varphi_1,\ndia{m_2}\varphi_2,\ldots,\ndia{m_s}\varphi_s$ be
an enumeration of all subformulas of $\varphi$ of the form
$\ndia{k}\psi$ such that $i < j$ implies $m_i \leq m_j$. Define
\begin{align*}
  N(\varphi) \coloneqq \bigwedge_{\mathclap{\substack{1 \leq i \leq s \\
        i < j \leq s}}} (\ndia{m_j}\varphi_j \limpl
  \ndia{m_i}\varphi_i).
\end{align*}
Furthermore, let
\begin{align*}
  N^+(\varphi) \coloneqq N(\varphi) \land \bigwedge_{\mathclap{1 \leq i
      \leq s}}\nbox{m_i}\varphi.
\end{align*}
Notice that, if $\psi$ is a subformula of $\varphi$, then $N^+(\varphi)$
implies $N^+(\psi)$ (in any of our logics under
consideration); likewise, in this case, $N(\varphi)$ also implies
$N(\psi)$.

\begin{remark}
  The formula $N^+(\varphi)$ is reminiscent of the formula
  $M^+(\varphi)$ presented during the arithmetical completeness proof
  for $\msGLP$. However, notice that $N^+(\varphi)$ contains only
  modalities that already occur in $\varphi$ which may not be the case
  for $M^+(\varphi)$. This property will be used below.
\end{remark}

\begin{lemma}[\citep{beklemishevjoost:2014}]
  \label{lem:reductionglpj}
  For any $\psi$,
  $\GLP \proves \psi \iff \J \proves N^+(\psi) \limpl \psi$.
\end{lemma}

\begin{lemma}
  \label{lem:complhelper}
  The following are equivalent for all $\varphi$:
  \begin{enumerate}[label={\rm (\arabic*)}]
  \item $\msGLP \proves \varphi$;\label{complhelper:0}
    \item $\GLP \proves P^+(\varphi) \limpl \varphi$; \label{complhelper:1}
    \item
      $\J \proves N^+(P^+(\varphi) \limpl \varphi) \limpl (P^+(\varphi)
      \limpl \varphi)$; \label{complhelper:2}
    \item $\msJ \proves N^+(\varphi) \limpl \varphi$. \label{complhelper:3}
  \end{enumerate}
\end{lemma}
\begin{proof}
  It is clear that~\ref{complhelper:1} implies~\ref{complhelper:0} since
  $\msGLP \proves P^+(\varphi)$ and $\msGLP$ extends $\GLP$. Likewise,
  it is clear that~\ref{complhelper:3} implies~\ref{complhelper:0} since
  $\msGLP \proves N^+(\varphi)$.  The equivalence
  between~\cref{complhelper:1,complhelper:2} is the content
  of~\Cref{lem:reductionglpj}.

  We are first going to show that~\ref{complhelper:0}
  implies~\ref{complhelper:3}. Assume
  $\msJ \nproves N^+(\varphi) \limpl \varphi$. By~\Cref{lem:treemodel},
  we know there is a finite $\msJ$-model
  $\fk{A} = (W,\{R_n\}_{n \geq 0},\val{\cdot})$ with root $r$ such that
  $\fk{A}, r \not\models N^+(\varphi) \limpl \varphi$. Moreover,
  $R_n = \emptyset$ for all $n$ such that $\ndia{n}$ does not occur in
  $N^+(\varphi) \limpl \varphi$. Hence, $R_n \neq \emptyset$ implies
  that $\ndia{n}$ occurs in $\varphi$, since $N^+(\varphi)$ and
  $\varphi$ contain exactly the same modalities. Our aim is to show that
  $\fk{A},r \models M^+(\varphi)$. Consider an enumeration
  $\ndia{m_1}\varphi_1,\ndia{m_2}\varphi_2,\ldots,\ndia{m_s}\varphi_s$
  of all subformulas of $\varphi$ of the form $\ndia{k}\psi$ and let
  $n \coloneqq \max_{i \leq s} m_i$. Recall that
  \begin{align*}
    M(\varphi) &\coloneqq \bigwedge_{\mathclap{\substack{1 \leq i \leq s \\ m_i < j
    \leq n}}}(\ndia{j}\varphi_i \limpl \ndia{m_i}\varphi_i),
  \end{align*}
  and, furthermore,
  $M^+(\varphi) \coloneqq M(\varphi) \land \bigwedge_{{i \leq n}}
  \nbox{i} M(\varphi)$.
  Let $i \in \{1,\ldots,s\}$ and consider any $j$ such that
  $m_i < j \leq n$. Now $\fk{A},r \models \ndia{j}\varphi_i$ only if
  $j = m_k$ for some $k = 1,\ldots,s$. In this case,
  $\fk{A},r \models \ndia{m_i} \varphi$ since
  $\fk{A},r \models N^+(\varphi)$. Otherwise, if $j \neq m_k$ for all
  $k = 1,\ldots,s$, then trivially
  $\fk{A},r \models \ndia{j} \varphi_i \limpl \ndia{m_i}\varphi_i$,
  since $\fk{A},r \not\models \ndia{j}\varphi_i$ due to the fact that
  $R_j = \emptyset$. Let $n \coloneqq \max_{i \leq s} m_i$ and consider
  any $i \leq n$. A similar line of reasoning as before shows that
  $\fk{A},r \models \nbox{i}M(\varphi)$. Hence,
  $\fk{A},r \models M^+(\varphi)$ and so
  $\msJ \nproves M^+(\varphi) \limpl \varphi$, whence
  $\msGLP \nproves \varphi$ follows by~\Cref{th:msglpcompl}.

  To complete our proof, it remains to be shown that~\ref{complhelper:3}
  implies~\ref{complhelper:2}. Assume
  $\J \nproves N^+(P^+(\varphi) \limpl \varphi) \limpl (P^+(\varphi)
  \limpl \varphi)$.
  By~\Cref{lem:rootedjmodel}, there is a $\J$-model
  $\fk{A} = (W,\{R_n\}_{n \geq 0},\val{\cdot})$ having root $r$ such
  that $\fk{A},r \models N^+(P^+(\varphi) \limpl \varphi)$,
  $\fk{A},r \models P^+(\varphi)$, and $\fk{A},r \not\models \varphi$.
  Moreover, $\fk{A}$ is such that $R_n \neq \emptyset$ implies that
  $\ndia{n}$ occurs as a modality in
  $N^+(P^+(\varphi) \limpl \varphi) \limpl (P^+(\varphi) \limpl
  \varphi)$,
  and hence in $\varphi$. Since $\fk{A}, r \models P^+(\varphi)$, we
  conclude that $\fk{A} \models P(\varphi)$, whence
  by~\Cref{lem:persistenceR} it follows that $\fk{A}$ is strongly
  persistent and thus a $\msJ$-model. Now
  $\fk{A},r \models N^+(P^+(\varphi) \limpl \varphi)$ entails that
  $\fk{A}, r \models N^+(\varphi)$ (since $\varphi$ is a subformula of
  $P^+(\varphi) \limpl \varphi$) and so
  $\fk{A},r \not\models N^+(\varphi) \limpl \varphi$. Thus,
  $\msJ \nproves N^+(\varphi) \limpl \varphi$ by the soundness of $\msJ$
  for the class of $\msJ$-models.
\end{proof}

\Cref{lem:complhelper} in particular establishes that
$\msGLP \proves \varphi$ iff $\GLP \proves P^+(\varphi) \limpl \varphi$.
In the following, we shall use this reduction of $\msGLP$ to $\GLP$ in
order to transfer some results known for $\GLP$ to $\msGLP$.

\subsection{Craig Interpolation} 

We say that a logic $\ca{L}$ enjoys the \emph{Craig interpolation
  property} if, whenever $\ca{L} \proves \varphi \limpl \psi$, then
there is a formula $\eta$ such that
$\ca{L} \proves \varphi \limpl \eta$ and $\ca{L} \proves \eta \limpl \psi$,
and the following conditions hold:
\begin{enumerate}
\item  $\eta$ contains only variables which are present in both
$\varphi$ and $\psi$, and
\item $\eta$ has only modalities that appear in $\varphi$ or $\psi$.
\end{enumerate} 
The formula $\eta$ is called \emph{interpolant} for
$\varphi \limpl \psi$.

\begin{theorem}[\citep{ignatiev:1993,beklemishev:2007a}]
  $\GLP$ enjoys the Craig interpolation property.
\end{theorem}

\begin{remark}
  Notice we state a rather weak form of Craig interpolation, since we do
  not demand that the modalities of $\eta$ occur in both $\varphi$ and
  $\psi$. Indeed, for $\GLP$, one cannot demand that property, as the
  example $\ndia{1}p \limpl \ndia{0}p$ shows
  (cf.~\citep{beklemishev:2007a}). However, as stated in the theorem
  above, we can demand that each modality from $\eta$ is contained in
  $\varphi$ or $\psi$. We shall use this property below, when we discuss
  variants of $\msGLP$ that restrict the use of sorts and modalities.
\end{remark}

\begin{corollary}
  \label{cor:interp}
  $\msGLP$ enjoys the Craig interpolation property.
\end{corollary}
\begin{proof}
  Suppose $\msGLP \proves \varphi \limpl \psi$. Let $\Theta$ be the set
  of all modalities from $\varphi \limpl \psi$. We have
  \begin{align*}
    \GLP \proves P^+_\Theta(\varphi \limpl \psi) \limpl (\varphi \limpl
    \psi).
  \end{align*}
  Note that $P^+_\Theta(\varphi \limpl \psi)$ is equivalent in $\GLP$ to
  $P^+_\Theta(\varphi) \land P^+_\Theta(\psi)$. Hence,
  \begin{align*}
    \GLP \proves P^+_\Theta(\varphi) \land P^+_\Theta(\psi) \limpl
    (\varphi \limpl \psi),
  \end{align*}
  whence by propositional logic
  \begin{align*}
    \GLP \proves P^+_\Theta(\varphi) \land \varphi \limpl
    (P^+_\Theta(\psi) \limpl \psi).
  \end{align*}
  Since $\GLP$ enjoys the Craig interpolation property, there is an
  interpolant $\eta$ containing only variables which occur in
  $ P^+_\Theta(\varphi) \land \varphi$ and
  $P^+_\Theta(\psi) \limpl \psi$ such that
  \begin{align*}
    \GLP \proves P^+_\Theta(\varphi) \land \varphi \limpl
    \eta\quad\text{and}\quad\GLP \proves \eta \limpl (P^+_\Theta(\psi)
    \limpl \psi)\rlap{.}
  \end{align*}
  But $\msGLP \proves P^+_\Theta(\varphi)$ and $\msGLP \proves
  P^+_\Theta(\psi)$. Therefore, $\msGLP \proves \varphi \limpl \eta$ and
  $\msGLP \proves \eta \limpl \psi$. Note that $\eta$ only contains
  variables which occur in $\varphi$ and $\psi$, since
  $P^+_\Theta(\chi)$ contains exactly the variables from $\chi$, for any
  formula $\chi$.
\end{proof}

\subsection{Complexity} 

We can also
exploit the reduction of $\msGLP$ to $\GLP$ to establish a
\PSpace-completeness result for $\msGLP$.

\begin{theorem}[\citep{shapirovsky:2008}]
  Deciding whether $\GLP \proves \varphi$ is complete for \PSpace.
\end{theorem}

\begin{corollary}
  Deciding whether $\msGLP \proves \varphi$ is complete for \PSpace.
\end{corollary}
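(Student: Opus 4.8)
The plan is to prove both a \PSpace\ upper bound and a matching \PSpace\ lower bound, in each case exploiting \Cref{lemma:glpred} together with the known fact that deciding $\GLP \proves \psi$ is itself \PSpace-complete (the \PSpace\ upper bound for $\GLP$ being due to Shapirovsky, and \PSpace-hardness already holding for the one-modality fragment $\GL$).

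For membership in \PSpace, I would invoke the reduction supplied by \Cref{lemma:glpred}, namely that $\msGLP \proves \varphi$ iff $\GLP \proves R^+_\Theta(\varphi) \limpl \varphi$, where $\Theta$ is the set of modalities occurring in $\varphi$. The first task is to check that $\varphi \mapsto R^+_\Theta(\varphi) \limpl \varphi$ is computable in polynomial time and yields a formula of polynomial size in $|\varphi|$. This is routine bookkeeping: writing $k$ for the number of variables of $\varphi$, the conjunction $R_\Theta(\varphi)$ contains at most $2k\,|\Theta|$ conjuncts, each of bounded size, and $R^+_\Theta(\varphi)$ prepends at most $|\Theta|$ boxed copies of $R_\Theta(\varphi)$, so the overall size is $O(|\Theta|^2 k)$, which is polynomial in $|\varphi|$. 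Since \PSpace\ is closed under polynomial-time many-one reductions and $\GLP$-provability lies in \PSpace, it follows that $\msGLP$-provability lies in \PSpace.

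For \PSpace-hardness, I would reduce $\GLP$-provability to $\msGLP$-provability. Given an arbitrary $\GLP$-formula $\psi$, let $\psi^\omega$ be the many-sorted formula obtained by assigning sort $\omega$ to every propositional variable of $\psi$. The crucial observation is that $R^+_\Theta(\psi^\omega)$ trivializes: in the definition of $R_\Theta(\psi^\omega)$ each side condition demands a modality $j \in \Theta$ with $j \geq \omega$ or $j > \omega$, which is impossible as every $j$ is a finite natural number; hence both generating sets are empty, $R_\Theta(\psi^\omega)$ is the empty conjunction $\verum$, and therefore $R^+_\Theta(\psi^\omega) \equiv_{\GLP} \verum$ (using $\GLP \proves \nbox{j}\verum$). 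Applying \Cref{lemma:glpred} to $\varphi = \psi^\omega$ then gives $\msGLP \proves \psi^\omega$ iff $\GLP \proves \psi$. As $\psi \mapsto \psi^\omega$ is trivially polynomial-time computable and $\GLP$-provability is \PSpace-hard, $\msGLP$-provability is \PSpace-hard as well.

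Combining the two bounds yields \PSpace-completeness. I do not anticipate a genuine obstacle: both directions fall out of \Cref{lemma:glpred} and the \PSpace-completeness of $\GLP$, and the only points requiring care are the elementary polynomial size estimate for $R^+_\Theta(\varphi)$ in the upper bound and the observation that the reduction formula collapses to $\verum$ precisely because no finite modality can meet the threshold $\omega$ in the lower bound.
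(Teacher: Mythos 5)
Your proof is correct and follows the same overall decomposition as the paper: membership via the reduction $\varphi \mapsto R^+_\Theta(\varphi) \limpl \varphi$ of \Cref{lemma:glpred} together with Shapirovsky's \PSpace-completeness result for $\GLP$, and hardness via the map assigning sort $\omega$ to all variables. The one genuine difference lies in how you justify the correctness of the hardness reduction. The paper appeals to the arithmetical completeness theorems for $\GLP$ and $\msGLP$ (a variable of sort $\omega$ ranges over arbitrary arithmetical sentences, exactly as in $\GLP$), whereas you argue purely syntactically: since every $j \in \Theta$ is a finite natural number, none satisfies $j \geq \omega$ or $j > \omega$, so $R_\Theta(\psi^\omega)$ is the empty conjunction and $R^+_\Theta(\psi^\omega)$ is $\GLP$-provable via $\nbox{n}\verum$, whence \Cref{lemma:glpred} gives $\msGLP \proves \psi^\omega$ iff $\GLP \proves \psi$ directly. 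Your version is more self-contained and elementary, needing only \Cref{lemma:glpred} rather than the full arithmetical machinery of \Cref{theorem:glparithcomplete}; the paper's version is shorter on the page but leans on a much heavier theorem. Both are valid, and your polynomial size estimate for $R^+_\Theta(\varphi)$ in the membership direction matches the paper's (unelaborated) claim.
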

\begin{proof}
  For membership, in order to check whether $\msGLP \proves \varphi$, it
  suffices to check whether $\GLP \proves P^+(\varphi) \limpl
  \varphi$. Note that $P^+(\varphi)$ is polynomial in the size of
  $\varphi$. Indeed, let $m$ be the number of different modalities
  occurring in $\varphi$. Then the formula $P^+(\varphi)$ contains for
  each propositional variable occurring in $\varphi$ at most $m$
  conjuncts of the form $\ndia{j}p \limpl p$ and at most $m$ conjuncts
  of the form $\ndia{j}\lneg p \limpl \lneg p$. Both $m$ and the number
  of variables in $\varphi$ are clearly bounded by the size of
  $\varphi$. Hence, the size of $P^+(\varphi)$ is at most quadratic in
  the size of $\varphi$.

  For hardness, we reduce the task of checking whether
  $\GLP \proves \varphi$ to our problem as follows. Let us consider
  $\varphi$ as a many-sorted formula whose propositional variables all
  have sort $\omega$. Now we observe that
  $\GLP \proves P^+(\varphi) \limpl \varphi$ iff
  $\msGLP \proves \varphi$ and, since $\varphi$ contains only variables
  of sort $\omega$, we see that $P^+(\varphi)$ is actually $\verum$ (the
  empty conjunction), i.e., $\GLP \proves \varphi$ iff
  $\msGLP \proves \varphi$.
\end{proof}

\subsection{Omitting the Sort $\omega$}

 An interesting question is to
consider a variant of $\msGLP$ that is formulated over a language where
propositional variables only have finite sorts, that is, only sorts
$n \in \omega$. We briefly treat this case here.

We actually work in a slightly more general setting here: let
$\alpha \in \omega \cup \{\omega\}$ and let $\mGLP{\alpha}$ denote the
logic that arises from $\msGLP$ when we only allow the use of variables
of sort less than $\alpha$ and modalities $\ndia{\beta}$ with
$\beta < \alpha$. Notice that formulas in the language of
$\mGLP{\alpha}$ all have finite sort. Moreover, note that $\msGLP$
extends $\mGLP{\alpha}$ in the sense that if
$\mGLP{\alpha} \proves \varphi$, then also $\msGLP \proves
\varphi$. Furthermore, if $\beta \leq \alpha$, then $\mGLP{\alpha}$
extends $\mGLP{\beta}$. Likewise, we can also define a variant
$\mJ{\alpha}$ of $\msJ$ that enforces similar restrictions on the
language as $\mGLP{\alpha}$ does and it can be easily checked that all
the results obtained for $\msJ$ carry over to the case of $\mJ{\alpha}$.

The notion of an arithmetical realization over a strong sequence of
provability predicates immediately captures the case of formulas that
contain only variables of finite sort. The arithmetical completeness
theorem for $\mGLP{\alpha}$ then reads:
\begin{theorem}
  \label{th:mpglpcompl}
  Let $T$ be an axiomatizable extension of $\PA$ and $\pi$ a strong
  sequence of provability predicates over $T$ whose predicates are all
  sound. Let $\varphi$ be a formula in the language of
  $\mGLP{\alpha}$. The following statements are equivalent:
  \begin{enumerate}[label={\rm (\arabic*)}]
  \item $\mGLP{\alpha} \proves \varphi$;\label{mpglp:1}
  \item $T \proves f_\pi(\varphi)$, for all arithmetical realizations
    $f$ that are typed for $\pi$.\label{mpglp:2}
  \end{enumerate}
\end{theorem}
\begin{proof}[Idea]
  The direction from~\ref{mpglp:1} to~\ref{mpglp:2} is immediate by the
  arithmetical completeness theorem for $\msGLP$ (\Cref{th:msglpcompl})
  and the fact that $\msGLP$ extends $\mGLP{\alpha}$. For the other
  direction, the same construction as for $\msGLP$ can be carried
  out. Essentially, one can just ignore the case of variables of sort
  $\geq \alpha$ in the construction presented for $\msGLP$.
\end{proof}

An easy consequence of this fact is that $\msGLP$ a conservative
extension of $\mGLP{\alpha}$:
\begin{corollary}
  \label{cor:conservative}
  $\msGLP$ conservatively extends $\mGLP{\alpha}$, i.e., if $\varphi$ is
  in the language of $\mGLP{\alpha}$ and $\msGLP \proves \varphi$, then
  also $\mGLP{\alpha} \proves \varphi$. Moreover,
  $\mGLP{\beta}$ conservatively extends $\mGLP{\alpha}$, for $\beta > \alpha$.
\end{corollary}
\begin{proof}
  If $\varphi$ is in the language of $\mGLP{\alpha}$, it contains no
  variables of sort $\geq \alpha$, hence, if $\msGLP \proves \varphi$,
  then $\PA \proves f_\pi(\varphi)$ for all realizations $f$ (where
  $\pi$ is a strong sequence of provability predicates over $\PA$) that
  are typed for $\pi$. The result now follows immediately
  from~\Cref{th:mpglpcompl}.
\end{proof}

Having the above result in place, Craig interpolation for $\mGLP{\alpha}$
follows immediately:
\begin{corollary}
  $\mGLP{\alpha}$ enjoys the Craig interpolation property.
\end{corollary}
\begin{proof}
  If $\mGLP{\alpha} \proves \varphi \limpl \psi$, where $\varphi$ and
  $\psi$ contain only variables of finite sort, then
  $\msGLP \proves \varphi \limpl \psi$ and hence, by~\Cref{cor:interp},
  there is an interpolant $\eta$ such that
  $\msGLP \proves \varphi \limpl \eta$ and
  $\msGLP \proves \eta \limpl \psi$. The interpolant $\eta$ contains
  only variables that jointly appear in $\varphi$ and $\psi$. Moreover,
  each modality from the $\eta$ is contained in $\varphi$ or
  $\psi$. Hence, $\eta$ is in the language of $\mGLP{\alpha}$. Since
  $\msGLP$ conservatively extends $\mGLP{\alpha}$
  (\Cref{cor:conservative}), we obtain
  $\mGLP{\alpha} \proves \varphi \limpl \eta$ and
  $\mGLP{\alpha} \proves \eta \limpl \psi$, as desired.
\end{proof}

For the \PSpace-hardness proof of $\msGLP$, the use of variables of sort
$\omega$ become vital, and the proof thus does not immediately carry
over to the case of $\mGLP{\alpha}$. We thus aim at a different proof in
the following, essentially exploiting a reduction of the intuitionistic
propositional calculus (henceforth denoted $\IPC$) to the standard
G\"odel-L\"ob logic $\GL$.\footnote{$\GL$ can be axiomatized by axiom
  schemas~\ref{glp:scheme:taut} to~\ref{glp:scheme:2} of $\GLP$ (with
  $\ndia{n}$ replaced by $\Diamond$) and is closed under modus ponens
  and $\varphi\limpl \psi/\Diamond\varphi \limpl \Diamond\psi$; see,
  e.g.,~\citep{boolos:1995} for an extensive treatment of $\GL$.} For
details on $\IPC$ and its translation to $\GL$ we refer the interested
reader to~\citep{dejongh:2006}.

The translation $\cdot^\ast$ of formulas from $\IPC$ to formulas of
$\GL$ is defined as follows:
  \begin{itemize}
    \item $\falsum^\ast \coloneqq \falsum$;
    \item $p^\ast \coloneqq \Box p \land p$, where $p$ is a propositional variable;
    \item $(\varphi \limpl \psi)^\ast \coloneqq \Box(\varphi^\ast \limpl \psi^\ast) \land (\varphi^\ast \limpl \psi^\ast)$;
    \item $(\varphi \land \psi)^\ast \coloneqq \varphi^\ast \land \psi^\ast$;
    \item $(\varphi \lor \psi)^\ast \coloneqq \varphi^\ast \lor \psi^\ast$.
  \end{itemize}

  Let $\fk{A} = (W, R, \val{\cdot})$ be a Kripke model. We say that
  $\fk{A}$ is \emph{reversely persistent}, if for any variable $p$ and
  all $x, y \in W$, it holds that $\fk{A},x \models p$ and $xRy$ imply
  $\fk{A},y \models p$.\footnote{We call this property reversely
    persistence here, since ``persistence'' in this paper refers to
    propagation of truth-values in the other direction. However, we
    remark that ``reverse persistence'' is usually called
    ``persistence'' (also in~\citep{dejongh:2006}).} We say that
  $\fk{A}$ is an \emph{intuitionistic} Kripke model, if it is reversely
  persistent and $R$ is reflexive and transitive. The \emph{irreflexive
    version of $\fk{A}$} is the model
  $\fk{A}^\ast \coloneqq (W^\ast, R^\ast,\val{\cdot})$ with
  $W^\ast \coloneqq W$, $x R^\ast y\, \defequiv\, xRy$ and $x \neq y$,
  and $\fk{A}^\ast,x \models p\; \defequiv\; \fk{A},x \models p$, for
  all variables $p$.

\begin{lemma}[\citep{dejongh:2006}]
  \label{ipctogl}
  $\IPC \proves \varphi$ iff $\GL \proves \varphi^\ast$. Moreover, for
  any finite intuitionistic Kripke model $\fk{A}$, it holds that
  $\fk{A},x \models \varphi$ iff $\fk{A}^\ast,x \models \varphi^\ast$,
  where $\fk{A}^\ast$ is the irreflexive version of $\fk{A}$.
\end{lemma}

We are going to use the rather
well-known result that deciding whether $\IPC \proves \varphi$ is complete
for \PSpace:

\begin{theorem}[\citep{dejongh:2006}]
  \label{ipcpspace}
  Deciding whether $\IPC \proves \varphi$ is complete for \PSpace.
\end{theorem}

Recall that $\mGLP{1}$ is the fragment of $\msGLP$ that is formulated
over variables of sort $0$ and only uses the modality $\ndia{0}$, which
we abbreviate by $\Diamond$ in the following (likewise we write $\Box$
for $\nbox{0}$). We aim to show that deciding whether
$\mGLP{1} \proves \varphi$ is already hard for \PSpace.

Towards this end, we are going to take an intermediate step and prove
that Visser's $\Sigma_1$-logic (see~\citep{boolos:1995,visser:1981})
$\GLV$ is \PSpace-complete.  The logic $\GLV$ consists of all theorems
of $\GL$ plus the axioms $p \limpl \Box p$, where $p$ is a propositional
variable. $\GLV$ is closely related to $\mGLP{1}$ with the difference
that $\GLV$ is arithmetically complete for the interpretation that
assigns $\Sigma_1$-sentences to propositional variables rather than
$\Pi_1$-sentences as in the case of $\mGLP{1}$ (and $\Box$ being
interpreted as the standard G\"odelian provability predicate);
cf.~\citep{visser:1981,boolos:1995}.  Notice that any reversely
persistent model validates the axioms of the form $p \limpl \Box
p$. Moreover:

\begin{theorem}[\citep{visser:1981}]
  \label{glvsound}
  $\GLV$ is sound and complete for the class of finite, irreflexive,
  transitive, and reversely persistent Kripke models.
\end{theorem}

\begin{lemma}
  \label{lem:hardnesshelp1}
  Deciding whether $\GLV \proves \varphi$ is hard for \PSpace.
\end{lemma}
\begin{proof}

  Consider a formula $\varphi$ in the language of $\IPC$ and its
  translation $\varphi^\ast$. We claim that $\IPC \proves \varphi$ iff
  $\GLV \proves \varphi^\ast$ which will then prove the claim of the
  lemma by virtue of \Cref{ipcpspace}. 

  Indeed, if $\IPC \proves \varphi$ then $\GL \proves \varphi^\ast$,
  whence $\GLV \proves \varphi^\ast$ since $\GLV$ clearly extends
  $\GL$. On the other hand, if $\IPC \nproves \varphi$, then there is a
  finite intuitionistic Kripke model $\fk{A} = (W, R, \val{\cdot})$ such
  that $\fk{A},x \not\models \varphi$ for some $x \in W$;
  see~\citep{dejongh:2006}. Let $\fk{A}^\ast$ be the irreflexive version
  of $\fk{A}$. By \Cref{ipctogl} we also have
  $\fk{A}^\ast, x \not\models \varphi^\ast$. Since $\fk{A}^\ast$ is an
  irreflexive and transitive model, it validates all theorems of
  $\GL$. Moreover, since it is reversely persistent, it also satisfies
  the axioms $p \limpl \Box p$. By \Cref{glvsound} we thus obtain
  $\GLV \nproves \varphi^\ast$ as required.
\end{proof}

\begin{lemma}
  \label{lem:hardnesshelp2}
  Deciding whether $\mGLP{1} \proves \varphi$ is hard for \PSpace.
\end{lemma}
\begin{proof}
  Let $\cdot^\lneg$ denote the translation from formulas in the language
  of $\mGLP{1}$ to formulas of $\GLV$ that replaces each propositional
  variable $p$ by its negation $\lneg p$. We claim that
  $\mGLP{1} \proves \varphi$ iff $\GLV \proves \varphi^\lneg$. As
  mentioned above, in~\citep{visser:1981} it is shown that $\GLV$ is
  arithmetically complete for \emph{$\Sigma_1$-realizations}, i.e.,
  arithmetical realizations that assign $\Sigma_1$-sentences to
  propositional variables. By~\Cref{th:mpglpcompl}, we know that
  $\mGLP{1}$ is arithmetically complete for arithmetical realizations
  that assign $\Pi_1$-sentences to propositional variables. (We choose a
  strong sequence of provability predicates $\pi$ that has the standard
  G\"odelian predicate $\Box_\PA$ as its $0$-th predicate.)  Now every
  $\Pi_1$-sentence ($\Sigma_1$-sentence, respectively) is equivalent to
  the negation of a $\Sigma_1$-sentence ($\Pi_1$-sentence,
  respectively). Hence, the result follows immediately
  by~\Cref{lem:hardnesshelp1} and by applying the respective
  arithmetical completeness theorems for $\mGLP{1}$ and $\GLV$.
\end{proof}

\begin{theorem}
  For any $\alpha \in \omega \cup \{\omega\}$, deciding whether
  $\mGLP{\alpha} \proves \varphi$ is complete for \PSpace.
\end{theorem}
\begin{proof}
  For membership, we observe that deciding
  $\mGLP{\alpha} \proves \varphi$ amounts to deciding
  $\msGLP \proves \varphi$, since $\GLP^\ast$ conservatively extends
  $\mGLP{\alpha}$ by~\Cref{cor:conservative}. For hardness, notice that
  checking $\mGLP{1} \proves \varphi$ (where $\varphi$ is in the
  language of $\mGLP{1}$) can be reduced to
  $\mGLP{\alpha} \proves \varphi$, again by~\Cref{cor:conservative}. The
  problem of deciding whether $\mGLP{1} \proves \varphi$ is hard for
  \PSpace\ by~\Cref{lem:hardnesshelp2}, whence the claim follows.
\end{proof}

\section{A Positive Variant of $\msGLP$}
\label{sec:many}

In this section we are going to study a positive variant of $\msGLP$
whose one-sorted counterpart has been studied recently
in~\citep{beklemishev:2014}. It was noticed in~\citep{beklemishev:2012}
that the proof-theoretic analysis of Peano arithmetic in the framework
of $\GLP$ only relies on certain positive formulas. This fragment,
denoted by $\RC$, is much simpler than $\GLP$, yet expressive enough for
major proof-theoretic applications of $\GLP$ as carried out
in~\citep{beklemishev:2004,beklemishev:2005}. In particular, $\RC$
allows one to define a system of ordinal notations up
to~$\varepsilon_0$.

Formulas of $\RC$ are implications of the form $A \gives B$ (called
\emph{sequents}), where $A$ and $B$ are \emph{positive formulas}
constructed using $\land$, $\verum$, diamond modalities $\ndia{n}$, and
propositional variables only. $\RC$ and fragments thereof were
axiomatized in~\citep{dashkov:2012}, where it is also proved that, in
contrast to $\GLP$, $\RC$ is complete for a natural class of finite
Kripke frames and that theoremhood for $\RC$ is decidable in polynomial
time.

Apart from its convenient computational properties, $\RC$ also allows
for a more general arithmetical interpretation than that of standard
$\GLP$. In~\citep{beklemishev:2014}, the second author of this paper
considers an arithmetical interpretation of positive formulas where
propositional variables are interpreted as (primitive recursive
enumerations of) arithmetical theories rather than single
sentences. This allows one to interpret the diamond modalities as
\emph{reflection schemas}, which are generalizations of consistency
assertions and are not necessarily finitely axiomatizable (therefore,
in~\citep{beklemishev:2014}, positive fragments of $\GLP$ are coined
\emph{reflection calculi}). In particular, the full uniform reflection
principle is realized in~\citep{beklemishev:2014} as a modality
$\ndia{\omega}$ that is part of the calculus $\RC\omega$ which
essentially extends $\RC$ to capture this modality.

Apart from the richer interpretation of the standard diamond modalities,
the fact that variables can be interpreted as arithmetical theories
allows the introduction of additional modalities that have no
counterpart in standard $\GLP$. To wit, $\RC$ has recently been extended
in~\citep{beklemishev:2017} in order to capture modalities that express partial
conservativity operators.

Considering our introduction of many-sorted $\GLP$, it is natural to ask
whether many-sorted logics make sense in the positive setting as
well. Therefore, in this section, we introduce a many-sorted variant of
the reflection calculus $\RC\omega$ presented
in~\citep{beklemishev:2014} and prove that our calculus is
arithmetically complete. In the arithmetical completeness proof, we rely
on the construction presented in~\citep{beklemishev:2014} for the
one-sorted setting.

\subsection{Basics}

We shall consider (many-sorted) \emph{positive formulas} that are formed
using propositional variables (again having sorts up to $\omega$ as in
the setting of $\msGLP$), conjunction ($\land$), the truth constant
$\verum$, and the diamond modalities $\ndia{\alpha}$, where $\alpha$ is
either a natural number or $\omega$. We shall write $\alpha A$ instead
of $\ndia{\alpha}A$ in the following. A \emph{sequent} is an expression
of the form $A \gives B$, where $A$ and $B$ are positive formulas---the
sequent $A \gives B$ stands for the formula $A \limpl B$.  The notion of
\emph{sort} is defined in the positive setting in exactly the same way
as it is defined for the more general $\GLP$. As before, the sort of $A$
is denoted by $\sort{A}$.

The following axiom schemas and rules of inference are propositional
ones and serve as a basis for the calculi to be presented:
\begin{enumerate}
  \item $A \gives A$;\quad $A \gives \verum$;\label{ax:pos1}
  \item $A \land B \gives A$; \quad $A \land B \gives B$;\label{ax:pos2}
  \item if $A \gives B$ and $B \gives C$, then infer $A \gives C$;\label{ax:pos3}
  \item if $A \gives B$ and $A \gives C$, then infer $A \gives B \land C$.\label{ax:pos4}
\end{enumerate}
Apart from these propositional axiom schemas and rules, our calculi will
all be closed under the following rule that essentially amounts to the
necessitation rule for standard modal logics:
\begin{enumerate}[resume]
\item if $A \gives B$ then infer $\alpha A \gives \alpha B$, for any
  $\alpha \leq \omega$.\label{ax:pos5}
\end{enumerate}

The positive logic $\msRCo$ is axiomatized by the schemas and
rules~\ref{ax:pos1} to~\ref{ax:pos5} as well as the following axiom
schemas:
\begin{enumerate}[resume]
  \item $\alpha A \gives A$, whenever $\sort{A} \leq \alpha$ (\emph{$\alpha$-persistence});\label{ax:pos6}
  \item $\alpha A \gives \beta A$, for $\beta < \alpha$ (\emph{monotonicity});\label{ax:pos7}
  \item $\alpha A \land B \gives \alpha(A \land B)$, where
    $\sort{B} < \alpha$.\label{ax:pos8}
\end{enumerate}

\begin{remark}
  It is worth commenting briefly on the axiomatization of $\msRCo$. The
  calculus presented here is a many-sorted version of the calculus
  $\RC\omega$ from~\citep{beklemishev:2014}. Essentially, in
  $\RC\omega$, the axiom schema of $\alpha$-persistence can only be
  applied to the case $\alpha = \omega$. Moreover, in $\RC\omega$, the
  axiom schema~\ref{ax:pos8} is replaced by
  \begin{itemize}
  \item
    $\alpha A \land \beta B \gives \alpha(A \land \beta B)$, for
    $\beta < \alpha$.
  \end{itemize}
  It is immediate that $\msRCo$ extends $\RC\omega$ in the same sense as
  $\msGLP$ extends $\GLP$.  

  The axiom schema of $\alpha$-persistence
  (schema~\ref{ax:pos6}) is essentially $\Sigma_{\alpha+1}$-completeness
  in the setting of $\msGLP$. Unlike $\msGLP$ and $\msJ$, $\msRCo$ has
  another axiom schema that refers to the notion of sorts, namely
  schema~\ref{ax:pos8}. This is due to the lack of negation in the
  positive calculi. Indeed, suppose $\sort{B} < n < \omega$. Then
  $\msJ \proves \lneg B \limpl \ndia{n}\lneg B$, whence
  $\msJ \proves B \limpl \nbox{n} B$, and so
\begin{align*}
  \msJ \proves \ndia{n} A \land B &\limpl \nbox{n} B\\
  &\limpl \ndia{n}(A \land B),
\end{align*}
by standard modal reasoning. That is, modulo the modality
$\ndia{\omega}$, axiom schema~\ref{ax:pos8} is readily derived in $\msJ$
and thus in $\msGLP$.
\end{remark}

The notion of a proof in $\msRCo$ is defined in the expected manner and
theoremhood is denoted by $\msRCo \proves A \gives B$.  For a set
$\Gamma$ of positive formulas, we shall write
$\msRCo \proves \Gamma \gives A$ if there are
$B_1,\ldots,B_n \in \Gamma$ such that
$\msRCo \proves B_1 \land \cdots \land B_n \gives A$. We denote by
$B(p/A)$ the result of substituting the variable $p$ by the positive
formula $A$ in $B$. Substitutions in this logic must again respect the
sorts of variables. We then have:

\begin{lemma}
  \label{lem:rcsubst}
  Suppose $\msRCo \proves A \gives B$ and
  $\sort{A},\sort{B} \leq \alpha$. Then
  $\msRCo \proves C(p/A) \gives C(p/B)$ for any $C$, where
  $\sort{p} \geq \alpha$.
\end{lemma}
\begin{proof}
  By an easy induction on the structure of $C$.
\end{proof}

\subsection{Arithmetical Interpretation}  
The arithmetical interpretation
for the positive calculi presented in~\citep{beklemishev:2014} assigns
\emph{primitive recursive numerations} of theories extending $\PA$ to
propositional variables. We shall adapt this interpretation to the
many-sorted setting in the following.

Recall that, in the setting of $\msGLP$, one admissible interpretation
of the modality $\ndia{n}$ is that of \emph{$n$-consistency}, i.e.,
consistency in $\PA$ plus the set of all true $\Pi_{n+1}$-sentences. Also
recall that we denote by $\nbox{n}_\PA(x)$ and
$\ndia{n}_\PA(x)$ arithmetical formulas that respectively express
$n$-provability and $n$-consistency in $\PA$; cf.~\Cref{sec:2}.  The
arithmetical interpretation of positive formulas in the language of
$\msRCo$ is generalized in two ways:
\begin{enumerate}[label=(\arabic*)]
\item Propositional variables are interpreted as arithmetical theories
  extending $\PA$ rather than sentences. These theories are formally
  presented by a bounded formula $\sigma \coloneqq \sigma(x)$ that
  arithmetically defines the set of axioms of the theory at hand.
\item Diamond modalities are interpreted as generalized consistency
  assertions, namely, reflection principles for theories extending
  $\PA$. The modality $\ndia{\omega}$ is interpreted as the full uniform
  reflection principle that has no finite axiomatization.
\end{enumerate}
We are going to formalize these two notions in the following.

A (\emph{primitive recursive})\emph{ numeration} is a bounded formula
$\sigma \coloneqq \sigma(x)$ which defines the G\"odel numbers of the
axioms of an extension $S$ of $\PA$. We say that $\sigma$
\emph{numerates} $S$. Furthermore, we say that $\sigma$ \emph{numerates a
  $\Pi_{n+1}$-axiomatized extension of $\PA$} if
\begin{align*}
  \PA \proves \forall \alpha\,(\sigma(\alpha) \limpl \Ax_\PA(\alpha) \lor \alpha \in \Pi_{n+1}),
\end{align*}
where the expression ``$\alpha \in \Pi_{n+1}$'' denotes a natural
bounded formula which expresses that $\alpha$ is the G\"odel number of a
$\Pi_{n+1}$-sentence (possibly using $n$ as an additional parameter) and
$\Ax_\PA(\alpha)$ is a formula defining the G\"odel numbers of the
axioms of $\PA$.\footnote{Recall our convention that Greek letters
  $\alpha,\beta,\ldots$ occurring in arithmetical formulas range over
  codes of formulas.} Thus, in case $\sigma$ numerates a
$\Pi_{n+1}$-axiomatized extension of $\PA$, $\sigma(x)$ provably defines
the set of axioms of a theory that is an extension of $\PA$ by a set of
$\Pi_{n+1}$-sentences.

For a numeration $\sigma$, we denote by $\Box_\sigma(\alpha)$ the
formula which defines the standard provability predicate of the theory
numerated by $\sigma$. For numerations $\sigma$ and $\tau$, we write
$\sigma \gives_\PA \tau$ if
\begin{align*}
  \PA \proves \forall \alpha\,(\Box_\tau(\alpha) \limpl
  \Box_\sigma(\alpha))\rlap{,}
\end{align*}
and we write $\sigma \gives \tau$ if
\begin{align*}
  \stdmodel \models \forall \alpha\,(\Box_\tau(\alpha) \limpl
  \Box_\sigma(\alpha)).
\end{align*}
We assume that every numeration, provably in $\PA$, numerates an
extension of $\PA$, that is, $\tau \gives_\PA \Ax_\PA$, for any $\tau$. As
usual, we write $\Box_\sigma \varphi$ instead of
$\Box_\sigma(\code{\varphi})$ if no confusion arises. We denote by
$\Con(\sigma)$ the sentence $\lneg\Box_\sigma\falsum$.

The formula $\Con_n(\sigma)$ expresses that the theory numerated by
$\sigma$ is $n$-consistent. We often regard $\code{\Con_n(\sigma)}$ as a
definable term which depends on $n$ and use that fact without adhering
to any special notation.

Now let $\sigma$ numerate $S$. The formula $\Con_n(\sigma)$ is another
way of expressing to the so-called \emph{global $\Pi_{n+1}$-reflection
  principle for $S$}; see, e.g.,~\citep{beklemishev:2005}.  When proving
statements about $\Con_n(\sigma)$, we shall in the following often use
the following equivalent characterization without any further comment:

\begin{lemma}[\citep{beklemishev:2005}]
  \label{lem:connequivform}
  For all $n \in \omega$, $\Con_n(\sigma)$ is provably equivalent in $\PA$
  to
  \begin{align*}
    \forall \alpha \in \Pi_{n+1}\,(\Box_\sigma(\alpha) \limpl
    \True_{\Pi_{n+1}}(\alpha))\rlap{.}
  \end{align*}
\end{lemma}

Given any arithmetical sentence $\varphi$, we denote by $\ul{\varphi}$
the numeration
\begin{align*}
  \Ax_\PA(\alpha) \lor \alpha = \code{\varphi}\rlap{,}
\end{align*}
which numerates the theory $\PA + \varphi$. In this setting, for any
numeration $\sigma$, $\Conn_n(\sigma)$ numerates the theory $\PA +
\Con_n(\sigma)$. 
The schema
\begin{align*}
  \Con_\omega(\sigma)\colon\quad\{\Con_n(\sigma) \mid n \in \omega\}
\end{align*}
is well-known to be equivalent over $\PA$ to the \emph{full uniform
  reflection principle for $S$}, see, e.g.,~\citep{beklemishev:2005}. We
shall denote by $\Conn_\omega(\sigma)$ a numeration which numerates the
theory $\PA + \Con_\omega(\sigma)$.

We are now ready to formally specify the intended arithmetical
interpretation of $\msRCo$:

\begin{definition}
  \label{definition:positive:arithmetical:realization}
  An \emph{arithmetical realization} is a function $f$ from
  positive formulas to numerations such that the following conditions
  are satisfied:
  \begin{itemize}
    \item $f(\verum) = \Ax_\PA$;
    \item $f(A \land B) = f(A) \lor f(B)$;
    \item $f(\alpha A) = \Conn_\alpha (f(A))$, for $\alpha \leq \omega$.
  \end{itemize}
  We say that $f$ is \emph{typed}, if the following condition is satisfied:
  \begin{itemize}
  \item for every propositional variable $p$ of sort $\alpha$, $f(p)$
    is a numeration which numerates
      \begin{enumerate*}[label=(\arabic*)]
      \item a $\Pi_{\alpha+1}$-axiomatized extension of $\PA$ in
        case $\alpha < \omega$ and
      \item an arbitrary extension of $\PA$ in case $\alpha = \omega$;
      \end{enumerate*}
  \end{itemize}
\end{definition}



\begin{lemma}
  Let $f$ be a typed arithmetical realization and $A$ a formula
  such that $\sort{A} < \omega$. Then $f(A)$ numerates a
  $\Pi_{\sort{A} + 1}$-axiomatized extension of $\PA$.
\end{lemma}
\begin{proof}
  By an easy induction on $A$. The cases for propositional variables and
  $\verum$ are clear. For the induction step, notice that for $n <
  \omega$, $\Con_n(\sigma)$ provably belongs to $\Pi_{n+1}$, for any
  numeration $\sigma$. Furthermore, provably in $\PA$, if $\varphi$
  belongs to $\Pi_{m}$, then also to $\Pi_{n}$, for $n > m$. Using these
  facts, the claim easily follows.
\end{proof}

\begin{lemma}[\citep{beklemishev:2014}]
  \label{lem:numerate}
  Let $\sigma$ numerate $S$ and $\varphi$ be a $\Pi_{n+1}$-sentence. If
  $S \proves \varphi$ then $\PA + \Con_n(\sigma) \proves
  \varphi$. Moreover, this statement is formalizable uniformly
  in $n$ in $\PA$, i.e.,
  \begin{align*}
    \PA \proves \forall n\,\forall \alpha \in \Pi_{n+1}\,
    (\Box_\sigma(\alpha) \limpl \Box_{\Conn_n(\sigma)}(\alpha))\rlap{.}
  \end{align*}
\end{lemma}

\begin{lemma}
  Let $\sigma$ be a numeration and $n < \omega$. Then $\Conn_n(\sigma)
  \gives_\PA \sigma$, whenever $\sigma$ numerates a
  $\Pi_{n+1}$-axiomatized extension of $\PA$.
\end{lemma}
\begin{proof}
  We reason in $\PA$ as follows. Suppose $\Box_\sigma(\varphi)$ and
  reason by induction on proof length of $\varphi$. The only interesting
  case is when $\varphi \in \Pi_{n+1}$ is an
  axiom. By~\Cref{lem:numerate}, we obtain
  $\Box_{\Conn_n(\sigma)}(\varphi)$. Hence,
  $\Conn_n(\sigma) \gives_\PA \sigma$ as required.
\end{proof}

\begin{lemma}[\citep{beklemishev:2014}]
  For any numeration $\sigma$, $\Conn_\omega(\sigma) \gives_\PA \sigma$.
\end{lemma}

\begin{lemma}
  Let $\varphi$ be a $\Pi_{m+1}$-sentence and $\sigma$ a numeration. For
  $m < n < \omega$ it holds that
  \begin{align*}
  \PA \proves \Con_n(\sigma) \land \varphi \limpl
  \Con_n(\sigma \lor \ul{\varphi})\rlap{.}
  \end{align*}
\end{lemma}
\begin{proof}
  We reason in $\PA$ as follows. Suppose
  $\Box_{\sigma \lor \ul{\varphi}}(\psi)$ for $\psi \in \Pi_{n + 1}$.
  Then $\Box_\sigma(\varphi \limpl \psi)$ by a formalized version of the
  standard deduction theorem. We know that $\varphi \limpl \psi$ is a
  $\Pi_{n+1}$-sentence since $m < n$. Thus, if $\Con_n(\sigma)$ then
  also $\True_{\Pi_{n+1}}(\varphi \limpl \psi)$ and so
  $\True_{\Pi_{n+1}}(\varphi) \limpl \True_{\Pi_{n+1}}(\psi)$. Now if
  $\varphi$ holds, then, since $\varphi \in \Pi_{n+1}$, we obtain
  $\True_{\Pi_{n+1}}(\varphi)$ whence $\True_{\Pi_{n+1}}(\psi)$ follows
  as required.
\end{proof}

\begin{lemma}
  Suppose $\tau$ numerates a $\Pi_{m+1}$-axiomatized extension of $\PA$. Then,
  for any numeration $\sigma$,
  \begin{align*}
    \Conn_\omega(\sigma) \lor \tau \gives_\PA \Conn_\omega(\sigma \lor
    \tau)\rlap{.}
  \end{align*}
\end{lemma}
\begin{proof}[Sketch]
  We show an informal version of this statement by an argument
  formalizable in $\PA$. That is, we must show that for each $n$,
  \begin{align*}
    \PA + \Conn_\omega(\sigma) + \tau \proves \Conn_n(\sigma \lor \tau).
  \end{align*}
  We may assume $n > m$ and use the previous lemma. A formalization of
  the corresponding argument yields the proof.
\end{proof}

\begin{proposition}
  \label{prop:rcasound}
  $\msRCo$ is arithmetically sound, i.e., if $\msRCo \proves A \gives B$
  then $f(A) \gives_\PA f(B)$ for every typed arithmetical
  realization $f$.
\end{proposition}
\begin{proof}
  By induction on the length of a derivation of $A \gives B$. The
  soundness of the propositional rules and axioms (i.e.,~\ref{ax:pos1}
  to~\ref{ax:pos4}) are immediate. The soundness of the modal axiom
  schemas~\ref{ax:pos6},~\ref{ax:pos7}, and~\ref{ax:pos8} follows from
  the previous lemmas and corollaries. For the monotonicity axiom
  schema~\ref{ax:pos7}, it is clear that
  $\Conn_\alpha(\sigma) \gives_\PA \Conn_\beta(\sigma)$, for
  $\alpha > \beta$, since the strength of $\Con_\alpha(\sigma)$
  increases with $\alpha$.

  It remains to be shown that the necessitation rule~\ref{ax:pos5} is
  sound. Suppose $f(A) \gives_\PA f(B)$ and let $n < \omega$. We claim
  that $\PA + \Con_n(f(A)) \proves \Con_n(f(B))$. Indeed, reasoning in
  $\PA + \Con_n(f(A))$, we see that if $\varphi \in \Pi_{n+1}$ and
  $\Box_{f(B)}(\varphi)$ holds, then also $\Box_{f(A)}(\varphi)$ (since
  $f(A) \gives_\PA f(B)$) and thus also $\True_{\Pi_{n+1}}(\varphi)$. By
  \Cref{lem:connequivform}, we thus obtain
  $\PA + \Con_n(f(A)) \proves \Con_n(f(B))$, i.e.,
  $\Conn_n(f(A)) \gives_\PA \Conn_n(f(B))$.
  
  Formalizing this argument also establishes that if
  $f(A) \gives_\PA f(B)$, then
  $\Conn_\omega(f(A)) \gives_\PA \Conn_\omega(f(B))$.
\end{proof}

\subsection{Arithmetical Completeness}

The arithmetical completeness for $\msRCo$ is obtained in a similar
fashion as the results for $\msGLP$ are obtained from the arithmetical
completeness proof of $\GLP$. To obtain arithmetical completeness for
$\msRCo$, one follows the proof for $\RC\omega$ as given
in~\citep{beklemishev:2014}.

Arithmetical completeness for $\msRCo$ can thus be roughly obtained as
follows:
\begin{itemize}
\item One identifies a class of Kripke models for which $\msRCo$ is
  sound and complete and which reflects the notion of sort in an
  appropriate way. It turns out that, as in the case of $\msGLP$, the
  notion of strong persistence is appropriate for this purpose. 
\item The arithmetical completeness of $\msRCo$ is established following
  the completeness proof for $\RC\omega$ as presented
  in~\citep{beklemishev:2014}. One exploits the fact that sequents that
  are non-provable in $\msRCo$ have Kripke counterexamples that are
  strongly persistent and observes that redoing the construction
  of~\citep{beklemishev:2014} admits the extraction of an arithmetical
  counterexample that is actually typed. Notice that this is in the same
  spirit as we conducted the arithmetical completeness proof for
  $\msGLP$---after all, it was enough to observe that the assumption of
  having a strongly persistent counterexample at hand allows one to
  conclude that the arithmetical realization constructed in the
  proof for standard $\GLP$ is already typed.
\end{itemize}
In the following, we shall elaborate on the arithmetical completeness
proof for $\msRCo$.

\subsubsection{Kripke Models}

We require an appropriate class of Kripke models for which
$\msRCo$ is complete. Let $\Phi$ be a set of positive formulas and
$$\ell(\Phi) \coloneqq \{\alpha \leq \omega \mid \text{$\alpha$ occurs in
  some $A \in \Phi$}\}.$$ We say that $\Phi$ is \emph{adequate}, if it is
closed under subformulas, $\verum \in \Phi$, and
\begin{enumerate}
\item if $\beta A \in \Phi$ and $\beta < \alpha \in \ell(\Phi)$, then
  $\alpha A \in \Phi$;
\item for any variable $p$ of sort $\alpha$, if $p \in \Phi$, then
  $\beta p \in \Phi$, for all $\beta \leq \alpha$.
\end{enumerate}
An \emph{$\msRCo$-theory in $\Phi$} is a set $\Gamma \subseteq \Phi$
such that $\msRCo \proves \Gamma \gives A$ and $A \in \Phi$ implies
$A \in \Gamma$.

The notion of a Kripke model immediately extends to positive formulas as
well once we include an accessibility relation $R_\omega$, i.e., Kripke
models are structures of the form
$\fk{A} = (W, \{R_\alpha\}_{\alpha \leq \omega}, \val{\cdot})$. Recall
that $A \gives B$ stands for $A \limpl B$ and hence we specify
$\fk{A},x \models A \gives B$ iff $\fk{A},x \models A \limpl B$.  The
notion of validity in a model thus immediately extends to sequents as
well. Moreover, the notions of $\J$-model and $\msJ$-model then carry
over to the positive case by additionally considering the relation
$R_\omega$. Recall that a $\msJ$-model is a strongly persistent
$\J$-model, and that a strongly persistent model $\fk{A}$ satisfies the
following conditions, for all $0 \leq \alpha \leq \omega$:
\begin{enumerate}[label={(\arabic*)}]
\item if $\sort{p} \leq \alpha$ and $\fk{A}, y \models p$, then
  $\fk{A}, x \models p$ whenever $x R_\alpha y$; and
\item if $\sort{p} < \alpha$ and $\fk{A}, y \not\models p$, then
  $\fk{A}, x \not\models p$ whenever $x R_\alpha y$.
\end{enumerate}
In particular, for the case $\alpha = \omega$, the first condition
states that the satisfaction of \emph{any} variable is propagated
downwards along $R_\omega$-arcs, since all variables have sort at most
$\omega$.

Let $\Phi$ be an adequate set. We say that a model $\fk{A}$ is
\emph{$\Phi$-monotone}, if for any $\alpha A \in \Phi$ and
$\beta \in \ell(\Phi)$ such that $\alpha < \beta$,
$\fk{A}, x \models \beta A$ implies $\fk{A},x \models \alpha A$. The
following completeness result for $\msRCo$ is an almost literal
repetition of a similar result for $\RC\omega$ proven
in~\citep{beklemishev:2014}. We omit a proof of this theorem, since it
can be proved by a straightforward adaption of the according result
in~\citep{beklemishev:2014}.

\begin{theorem}
  \label{th:rcsemantics}
  Let $\Phi$ be a finite adequate set. Then there is a finite model
  $\fk{A} = (W, \{R_\alpha\}_{\alpha \leq \omega},\val{\cdot})$ such that
  \begin{enumerate}[label={\rm (\arabic*)}]
  \item $\fk{A}$ is an irreflexive $\msJ$-model, i.e., a $\msJ$-model in
    which all $R_\alpha$ are irreflexive;
  \item $R_\alpha = \emptyset$, for all $\alpha \not\in \ell(\Phi)$;
  \item $\fk{A}$ is $\Phi$-monotone;
  \item for any $\msRCo$-theory $\Gamma$ in $\Phi$, there is a node
    $x \in W$ such that, for any formula $A$, $A \in \Gamma$ iff
    $\fk{A},x \models A$.
  \end{enumerate}
\end{theorem}

\subsubsection{Arithmetical Completeness for $\msRCo$} We are now going to
prove the arithmetical completeness theorem for $\msRCo$, relying on the
construction for $\RC\omega$ presented in~\citep{beklemishev:2014}:
\begin{theorem}\label{arithmet-compl}
  The following are equivalent:
  \begin{enumerate}[label={\rm(\arabic*)}]
  \item $\msRCo \proves A \gives B$;\label{rccompl:1}
  \item $f(A) \gives_\PA f(B)$, for every typed arithmetical
    realization $f$;\label{rccompl:2}
  \item $f(A) \gives f(B)$, for every typed arithmetical realization
    $f$.\label{rccompl:3}
  \end{enumerate}
\end{theorem}
Note that the implication from~\ref{rccompl:1} to~\ref{rccompl:2} was
proved in~\Cref{prop:rcasound} and statement~\ref{rccompl:2} clearly
implies~\ref{rccompl:3}. In what follows, we establish that
~\ref{rccompl:3} implies~\ref{rccompl:1}. We do so by proving its
contrapositive.

Assume $\msRCo \nproves A \gives B$. Consider a finite adequate set
$\Phi$ containing $\{A,B\}$. Let
$\fk{A} = (W,\{R_\alpha\}_{\alpha \leq \omega},\val{\cdot})$ be a Kripke
model satisfying the conditions of~\Cref{th:rcsemantics} such that, for
some node $x \in W$, $\fk{A},x \models A$, yet $\fk{A},x \not\models B$.

As in the case of $\msGLP$, one can again assume that $\fk{A}$ is rooted
(see~\citep{beklemishev:2014}). Now one proceeds with the Solovay-type
construction similarly as for $\msGLP$. That is, one identifies the set
$W$ with a finite set of natural numbers $\{1,\ldots,N\}$ so that $1$ is
the root. One attaches a new root $0$ to $\fk{A}$ by stipulating that
$0R_0 x$, for all $x \in W$. The valuation of the variables at the new
root $0$ will be the same as in node $1$; abusing notation, let us call
the resulting model $\fk{A}$ as well. It is easy to check that $\fk{A}$
still satisfies the properties of~\Cref{th:rcsemantics} and that
$\fk{A},0 \models A$, but $\fk{A}, 0 \not\models B$. We assume that the
relation $x \in \val{C}$ (where $C \in \Phi$ is a positive formula) and the
relations $R_\alpha$ are naturally arithmetized by bounded formulas.

In the following, we shall denote by $\Prf_n(\alpha,y)$ an arithmetical
formula (of arithmetical complexity $\Delta_{n+1}$) expressing that
``\emph{$y$ is a proof of a formula $\alpha$ from the axioms of $\PA$
  and all true $\Pi_n$-sentences}''---recall that $\PA \proves \nbox{n}_\PA(\alpha) \lequiv \exists y\,\Prf_n(\alpha,y)$.
We again assume that each provable formula has arbitrarily long proofs
and that this holds provably in $\PA$.

Recall from the arithmetical completeness proof of $\msGLP$ that, if
$G(x,y)$ codes a function $g \colon \omega \rightarrow W$ in $\PA$, then
the formula $\ell^G = x$ is an abbreviation of the formula
$\exists N_0 \forall n \geq N_0\; G(n, x)$, i.e., the formula which
expresses the fact that $g$ reaches a limit at $x$.\footnote{We will
  reuse here most of the notation from the arithmetical completeness
  proof of $\msGLP$ without further comment.}

There is a striking difference in the arithmetical completeness proof of
$\RC\omega$ that makes it substantially different from that of $\GLP$:
since the arithmetical complexity of the uniform reflection principle is
unbounded, finitely many Solovay-style functions do not suffice for
obtaining completeness. Instead, in~\citep{beklemishev:2014}, infinitely
many such functions of increasing arithmetical complexity are
employed.

We are now going to state the major technical lemmas
from~\citep{beklemishev:2014} which will allow us to deduce an
arithmetical completeness theorem for $\msRCo$. First, the following
lemma states basic properties of the used Solovay-style functions:

\begin{lemma}[\citep{beklemishev:2014}]
  \label{lem:rccomplhelp1}
  Let $M$ denote the maximum modality $m < \omega$ occurring in $\Phi$,
  and $0$ if there is no such $m$. There is an infinite sequence
  $h_0,h_1,\ldots$ of functions of type $\omega \rightarrow W$ that
  satisfy the following properties:
  \begin{enumerate}[label={\rm (\arabic*)}]
  \item Each $h_k$ is defined by a respective formula $H_k$ in $\PA$
    which is $\Delta_{k+1}$ in $\PA$;
    \item the function $\varphi \colon k \longmapsto \code{H_k}$ is
      primitive recursive;
    \item for each $h_k$, we have that $h_k(x) = y$ if and only if, either
    \begin{itemize}
      \item $x = y = 0$, or
      \item $h_i(n) \neq h_i(n + 1) = y$, for some $i < k$, or
      \item $\exists m \geq \max\{M,k\}\,\Prf_k(\code{\ell^{H_m} \neq y},n)$ and $h_k(n) R_k y$ or $h_k(n) R_\omega y$, or
      \item $y = h_k(n)$.
    \end{itemize}
  \end{enumerate}
\end{lemma}

In the following, we fix such a sequence $h_0,h_1,\ldots$ of functions
with the properties as stated in~\Cref{lem:rccomplhelp1}
above. Informally speaking, the behavior of the functions $h_k$ in
comparison to those employed for $\msGLP$ can be described as follows
(see~\citep{beklemishev:2014}):
\begin{itemize}
\item The functions with lower index have higher priority in the sense
  that, whenever $h_m$ makes a move (i.e., if it changes its position to
  a new world from $W$), then $h_n$ will make the same move, for any
  $n > m$;
\item $h_k$ also reacts to proofs of limit statements of functions of
  lower priority, not only to those of itself;
\item $h_k$ is also allowed to move along $R_\omega$-edges.
\end{itemize}

\begin{lemma}[\citep{beklemishev:2014}]
  \label{lem:ellrcprop}
  For each $n,m$, provably in $\PA$,
  \begin{enumerate}[label={\rm (\arabic*)}]
    \item $\exists! z \in W\, \ell^{H_n} = z$;\label{lem:ellrcprop:1}
    \item $\ell^{H_n}R_{n+1}\ell^{H_{n+1}}$ or $\ell^{H_n}R_\omega\ell^{H_{n+1}}$ or $\ell^{H_n} = \ell^{H_{n+1}}$;
    \item if $m < n$ then $\ell^{H_m} = \ell^{H_n}$ or
      $\ell^{H_m}R_\alpha \ell^{H_n}$, for some
      $\alpha \in (m,n]\cup \{\omega\}$.\label{lem:ellrcprop:3}
  \end{enumerate}
\end{lemma}
The first item of~\Cref{lem:ellrcprop} states that every function
provably reaches a unique limit. The second item states that the limit
of $h_{n+1}$ is (provably) reachable from the limit of $h_n$ either via
an $R_{n+1}$-arc or an $R_\omega$-arc. The third item can be obtained
from the second one via an (external) induction on $n$.

For all $n < \omega$, we define an arithmetical formula $L_n(a)$ as
follows:
\begin{align*}
  L_n(a) \coloneqq \begin{cases}
    \exists x\, h_n(x) = a, & \text{if $n = 0$,}\\
    \exists x\,(h_n(x) = a \land \forall z \geq x\ h_{n-1}(z) = h_{n-1}(x)), & \text{otherwise.}
 \end{cases}
\end{align*}
Notice that $L_n(a)$ is expressible by a $\Sigma_{n+1}$-formula. As in
the arithmetical completeness proof for $\msGLP$, let $R^\ast_k(x)$
denote the set
$\{y \in W \mid \exists \alpha \geq k\colon x R_\alpha y\}$.

\begin{lemma}[\citep{beklemishev:2014}]
  \label{lem:ellmove}
  Let $k \geq n$ and $a \coloneqq \ell^{H_n}$. Then, provably in $\PA$,
  $L_n(a)$ implies that $\ell^{H_k} \in R_n^\ast(a) \cup \{a\}$.
\end{lemma}

Intuitively, \Cref{lem:ellmove} states that,
assuming $L_n(a)$ where $a$ is the limit of $h_n$, the limit of the
function $h_k$ is (provably) either $a$ or some point that is reachable
via a path from $a$ that consists of arcs $R_\alpha$, where
$n \leq \alpha \leq \omega$. This is because, due to the assumption
$L_n(a)$, $h_n$ can move only along such edges from $a$ onward.

The formulas $L_n(a)$ will be important for us to extract an
arithmetical realization that is typed. This is due to the following
lemma:

\begin{lemma}
  \label{lem:rctype}
  For all $n < \omega$ and all variables $p$ of sort $k \leq n$,
  provably in $\PA$,
  \begin{align*}
    \ell^{H_n} \in \val{p} \iff \forall w \in W \setminus \val{p}\, \lneg L_k(w).
  \end{align*}
\end{lemma}
\begin{proof}
  We reason in $\PA$ as follows. For the direction from left to right,
  suppose $\ell^{H_n} \in \val{p}$ and suppose to the contrary that
  there is a $w \in W$ and an $x$ such that $w \not\in \val{p}$ and
  $L_k(x, w)$. By strong persistence, we know that $v \not\in \val{p}$
  for all $v \in R_k^\ast(w)$. Since $k \leq n$, \Cref{lem:ellmove}
  gives us $\ell^{H_n} \in R_k^\ast(w) \cup \{w\}$, whence
  $\ell^{H_n} \in W \setminus \val{p}$. This contradicts the uniqueness of
  $\ell^{H_n}$ (that is, \cref{lem:ellrcprop:1} of~\Cref{lem:ellrcprop}).

  For the other direction, suppose (in $\PA$) that
  $\forall w \in W \setminus \val{p}\, \lneg L_k(w)$ and assume
  $\ell^{H_n} \neq x$ for all $x \in \val{p}$. By~\cref{lem:ellrcprop:1}
  of~\Cref{lem:ellrcprop}, it follows that
  $\ell^{H_n} \in W \setminus \val{p}$. Let $w \in W \setminus \val{p}$;
  we first prove that $\ell^{H_k} \neq w$. In case $k = 0$, by
  $\lneg L_k(w)$, we infer $\forall x\,h_k(x) \neq w$ and thus
  $\ell^{H_k} \neq w$. Suppose now that $k > 0$. Then $\lneg L_k(w)$ is
  equivalent to
  $\forall x\,(h_k(x) \neq w \lor \exists z \geq x\ h_{k-1}(z) \neq
  h_{k-1}(x))$. We claim that there are arbitrarily large $x$ such that
  $h_k(x) \neq w$. Indeed, suppose there is an $x_0$ such that
  $\forall y \geq x_0\ h_k(y) = w$. By $\lneg L_k(w)$, we infer that
  $\exists z \geq x_0\ h_{k-1}(z) \neq h_{k-1}(x_0)$, whence it follows
  that there is a $y_0 \geq x_0$ such that
  $h_{k-1}(x_0) = h_{k-1}(y_0) \neq h_{k-1}(y_0+1)$. By the definition
  of $h_k$, this implies $w = h_k(y_0 + 1) = h_{k-1}(y_0+1)$. Using the
  assumption $\lneg L_k(w)$ again, we infer that
  $\exists z \geq y_0 + 1\ h_{k-1}(z) \neq h_{k-1}(y_0 + 1)$. Thus,
  there is a $y_1 \geq y_0 + 1$ such that
  $w = h_{k-1}(y_0 + 1) = h_{k-1}(y_1) \neq h_{k - 1}(y_1 + 1)$. By the
  definition of $h_k$, this again implies
  $h_k(y_1 + 1) = h_{k-1}(y_1 + 1)$. Notice that
  $y_1 + 1 > y_1 \geq y_0 + 1 > y_0 \geq x_0$ and
  $h_k(x_0) = h_k(y_0 + 1) = h_k(y_1) = w$, but certainly
  $h_k(y_1 + 1) \neq w$. This contradicts the fact that
  $\forall y \geq x_0\ h_k(y) = w$. Thus, $\ell^{H_k}$ cannot reach its
  limit at $w$. It remains to observe that this entails
  $\ell^{H_k} \in \val{p}$ by~\cref{lem:ellrcprop:1}
  of~\Cref{lem:ellrcprop} and thus we infer $\ell^{H_n} \neq \ell^{H_k}$
  (recall that we have $\ell^{H_n} \in W \setminus \val{p}$). However,
  this means that $n > k$ and so, by~\cref{lem:ellrcprop:3}
  of~\Cref{lem:ellrcprop}, this implies that
  $\ell^{H_k}R_\alpha \ell^{H_n}$, for some
  $\alpha \in (k,n] \cup \{\omega\}$. This contradicts the property of
  $\fk{A}$ being strongly persistent, since $\ell^{H_k} \in \val{p}$ but
  $\ell^{H_n} \in W \setminus \val{p}$.
\end{proof}

We shall now define an appropriate arithmetical realization. Let
$\{\varphi_i : i \in I\}$ be a primitive recursive set of formulas. We
will denote by $[\varphi_i : i\in I]$ a numeration that numerates the
theory $\PA + \{\varphi_i : i \in I\}$. Using this notation, we now
define an arithmetical realization $f$ as follows:
\begin{align*}
  f(p) \coloneqq [\ell^{H_n} \in \val{p} : n \geq M]. 
\end{align*}
Notice that the formula $\ell^{H_n} \in \val{p}$ can indeed be
constructed primitive recursively from the parameter $n$, since the
function $\varphi \colon k \longmapsto \code{H_k}$ is primitive
recursive according to~\Cref{lem:rccomplhelp1}.

The following lemma states that, for $\sort{p} = k < \omega$, the
numeration $f(p)$ is indeed a $\Pi_{k+1}$-axiomatized extension of
$\PA$.

\begin{lemma}
  For each variable $p$ of sort $k < \omega$, $f(p)$ numerates a
  $\Pi_{k+1}$-axiomatized extension of $\PA$.
\end{lemma}
\begin{proof}
  Let $n \geq M$ and consider the sentence $\ell^{H_n} \in \val{p}$. If
  $k \leq n$, then by~\Cref{lem:rctype}, provably in $\PA$,
  \begin{align*}
    \ell^{H_n} \in \val{p} \iff \forall w \in W \setminus \val{p}\,\lneg L_k(w) \iff \bigwedge_{\mathclap{w \in W \setminus \val{p}}} \lneg L_k(\ol{w}).
  \end{align*}
  Recall that $L_k(x)$ is $\Sigma_{k+1}$ in $\PA$, whence it follows
  that $\lneg L_k(\ol{w})$ is $\Pi_{k+1}$ in $\PA$ and thus so is
  $\ell^{H_n} \in \val{p}$.

  For the case $k > n$, recall the very definition of
  $\ell^{H_n} \in \val{p}$ is
  $\bigvee_{x \in \val{p}} \ell^{H_n} = \ol{x}$ and the definition of
  $\ell^{H_n} = \ol{x}$ reads
  $\exists N_0\,\forall z > N_0\,H_n(z,\ol{x})$. By virtue
  of~\Cref{lem:rccomplhelp1}, $H_n(x,y)$ is $\Delta_{n+1}$ in $\PA$,
  whence it follows that $\ell^{H_n} = \ol{x}$ is $\Sigma_{n + 2}$ in
  $\PA$ and thus $\ell^{H_n} \neq \ol{x}$ is $\Pi_{n+2}$ in
  $\PA$. Observe that, by~\cref{lem:ellrcprop:1}
  of~\Cref{lem:ellrcprop}, provably in $\PA$,
  \begin{align*}
    \ell^{H_n} \in \val{p} \iff \bigwedge \{\ell \neq \ol{x} \mid x \in W \setminus \val{p}\}.
  \end{align*}
  Thus, $\ell^{H_n} \in \val{p}$ is $\Pi_{k+1}$ in $\PA$, since
  $k + 1 \geq n + 2$ by assumption.
\end{proof}

It follows that $f$ is actually a typed arithmetical
realization as desired. We can now proceed along the lines
of~\citep{beklemishev:2014} and quote some more technical lemmas that
will allow us to conclude the arithmetical completeness proof for
$\msRCo$:

\begin{lemma}[\citep{beklemishev:2014}]
  \label{lem:complhelper2}
  For any formula $C \in \Phi$,
  \begin{enumerate}[label={\rm(\arabic*)}]
    \item $[\ell^{H_n} \in \val{C} : n \geq M] \gives_\PA f(C)$;
    \item
      $\ul{\ell^{H_0} \neq 0} \lor f(C) \gives_\PA [\ell^{H_n} \in
      \val{C} : n \geq M]$.
  \end{enumerate}
\end{lemma}

\begin{lemma}[\citep{beklemishev:2014}]
  \label{lem:complhelper4}
  For all $n \geq 0$, $\mathbb{N} \models \ell^{H_n} = 0$.
\end{lemma}

Intuitively,~\Cref{lem:complhelper2} can be seen as a counterpart to the
``commutation lemma'' in the arithmetical completeness proof of $\msGLP$
(\Cref{lem:commutation}), while~\Cref{lem:complhelper4} simply states
that, in the standard model, all functions reach their limit 
at $0$.

Now we can conclude the proof of \Cref{arithmet-compl} in accordance
with~\citep{beklemishev:2014} as follows. Recall that we have
$\fk{A},1 \models A$ but $\fk{A}, 1 \not\models B$. Let $\sigma$ be the
numeration $[\ell^{H_n} = \ol{1} : n \geq M]$ and let $S$ be the theory
numerated by $\sigma$. By~\Cref{lem:complhelper2}, we know that
\begin{align*}
  \sigma &\gives_\PA [\ell^{H_n} \in \val{A} : n \geq M]\\
         &\gives_\PA f(A).
\end{align*}
By~\Cref{lem:complhelper2}, we also have
\begin{align*}
  \ul{\ell^{H_0} \neq 0} \lor f(B) &\gives_\PA [\ell^{H_n} \in \val{B} : n \geq M] \\
   &\gives_\PA [\ell^{H_n} \neq \ol{1} : n \geq M].
\end{align*}
Now if we had $f(A) \gives f(B)$, then
$S \proves \ell^{H_M} \neq \ol{1}$ and so $S$ would be inconsistent. One
can easily show that
$\PA \proves \ell^{H_n} = \ol{1} \limpl \ell^{H_m} = \ol{1}$, for all
$m \leq n$. Thus, there is a $\PA$-proof of $\ell^{H_n} \neq \ol{1}$,
for some $n \geq M$ (otherwise, $\PA \proves S$ and so $\PA$ would be
inconsistent too). But this means that $h_0$ must eventually take a
value different from $0$ by its definition. This is, however, impossible
due to~\Cref{lem:complhelper4}.

\section{Conclusion}
\label{sec:5}

We have studied a many-sorted fragment of $\msGLP$ where propositional
variables are assigned sorts $\alpha \leq \omega$. The logic $\msGLP$
admits a more fine-grained arithmetical interpretation than standard
$\GLP$: variables of finite sort $n < \omega$ range over
$\Pi_{n+1}$-sentences of the arithmetical hierarchy, while those of sort
$\omega$ range over arbitrary sentences. The inclusion of sorts in the
modal languages naturally corresponds, in the realm of modal logics, to
the notion of stratification of graded provability algebras in the
algebraic world. We showed that $\msGLP$ is arithmetically complete by
exploiting an existing construction for $\GLP$. Moreover, we reduced
$\msGLP$ to $\GLP$ and thereby transferred results from $\GLP$ to
$\msGLP$ like Craig interpolation and \PSpace\ decidability. We studied
variants of $\msGLP$ that restrict the use of sorts. A positive variant
of $\msGLP$, denoted $\msRCo$, was introduced which allows for an even
richer arithmetical interpretation due to the fact that variables are
permitted to range over arithmetical theories rather than single
sentences. This arithmetical interpretation allows the introduction of
an additional modality $\ndia{\omega}$ which is not present in $\msGLP$,
and which corresponds to the full uniform reflection principle. We
showed that $\msRCo$ is arithmetically complete by again relying on an
existing construction for its one-sorted counterpart.

\medskip
\noindent
\textbf{Funding.} This work was supported by the Austrian Science Fund
(FWF) [Y698 to G.B., W1255-N23 to H.T.]; by the Austrian Academy of Sciences [DOC Fellowship to G.B.]; and by the
Russian Foundation for Basic Research [15-01-09218 to
L.D.B.].

\medskip
\noindent
\textbf{Acknowledgments.} The authors would like to thank the anonymous
referees who provided useful comments for improving this paper.

  \bibliographystyle{plainnat}
  \bibliography{references}

\begin{thebibliography}{27}
\providecommand{\natexlab}[1]{#1}
\providecommand{\url}[1]{\texttt{#1}}
\expandafter\ifx\csname urlstyle\endcsname\relax
  \providecommand{\doi}[1]{doi: #1}\else
  \providecommand{\doi}{doi: \begingroup \urlstyle{rm}\Url}\fi

\bibitem[Ardeshir and Mojtahedi(2015)]{ardeshir:2015}
Mohammad Ardeshir and S.~Mojtaba Mojtahedi.
\newblock Reduction of provability logics to {$\Sigma_1$}-provability logics.
\newblock \emph{Logic Journal of the IGPL}, 23\penalty0 (5):\penalty0 842--847,
  2015.

\bibitem[Artemov and Beklemishev(2004)]{artemovbekl:2004}
Sergei~N. Artemov and Lev~D. Beklemishev.
\newblock {Provability Logic}.
\newblock In \emph{Handbook of Philosophical Logic, 2nd ed.}, pages 229--403.
  Kluwer, 2004.

\bibitem[Beklemishev(2004)]{beklemishev:2004}
Lev~D. Beklemishev.
\newblock {Provability algebras and proof-theoretic ordinals, I}.
\newblock \emph{Annals of Pure and Applied Logic}, 128\penalty0 (1-3):\penalty0
  103--123, 2004.

\bibitem[Beklemishev(2005)]{beklemishev:2005}
Lev~D. Beklemishev.
\newblock Reflection principles and provability algebras in formal arithmetic.
\newblock \emph{{Russian Mathematical Surveys}}, 60\penalty0 (2):\penalty0
  197--268, 2005.
\newblock Russian original: \emph{Uspekhi Matematicheskikh Nauk}, 60(2): 3--78,
  2005.

\bibitem[Beklemishev(2006)]{beklemishev:2006}
Lev~D. Beklemishev.
\newblock {The Worm Principle}.
\newblock In Z.~Chatzidakis, P.~Koepke, and W.~Pohlers, editors, \emph{Lecture
  Notes in Logic 27.~Logic Colloquium '02}, pages 75--95. AK Peters, 2006.
\newblock Preprint: Logic Group Preprint Series 219, Utrecht Univ., March 2003.

\bibitem[Beklemishev(2010{\natexlab{a}})]{beklemishev:2007a}
Lev~D. Beklemishev.
\newblock On the {Craig} interpolation and the fixed point properties of {GLP}.
\newblock In S.~Feferman et~al., editor, \emph{Proofs, Categories and
  Computations. Essays in honor of G. Mints}, Tributes, pages 49--60. College
  Publications, London, 2010{\natexlab{a}}.

\bibitem[Beklemishev(2010{\natexlab{b}})]{beklemishev:2010}
Lev~D. Beklemishev.
\newblock Kripke semantics for provability logic {GLP}.
\newblock \emph{Annals of Pure and Applied Logic}, 161\penalty0 (6):\penalty0
  756--774, 2010{\natexlab{b}}.

\bibitem[Beklemishev(2011)]{beklemishev:2011}
Lev~D. Beklemishev.
\newblock {A simplified proof of arithmetical completeness theorem for
  provability logic GLP}.
\newblock \emph{Proceedings of the Steklov Institute of Mathematics},
  274\penalty0 (1):\penalty0 25--33, 2011.

\bibitem[Beklemishev(2012)]{beklemishev:2012}
Lev~D. Beklemishev.
\newblock Calibrating provability logic: from modal logic to reflection
  calculus.
\newblock In T.~Bolander, T.~Bra\"{u}ner, S.~Ghilardi, and L.~Moss, editors,
  \emph{{Advances in Modal Logic, v.~9}}, pages 89--94. College Publications,
  London, 2012.

\bibitem[Beklemishev(2014)]{beklemishev:2014}
Lev~D. Beklemishev.
\newblock {Positive provability logic for uniform reflection principles}.
\newblock \emph{{Annals of Pure and Applied Logic}}, 165\penalty0 (1):\penalty0
  82--105, 2014.

\bibitem[Beklemishev(2017{\natexlab{a}})]{Bek17}
Lev~D. Beklemishev.
\newblock On the reduction property for {GLP-algebras}.
\newblock \emph{Doklady: Mathematics}, 95\penalty0 (1):\penalty0 50--54,
  2017{\natexlab{a}}.

\bibitem[Beklemishev(2017{\natexlab{b}})]{beklemishev:2017}
Lev~D. Beklemishev.
\newblock On the reflection calculus with partial conservativity operators.
\newblock In \emph{Logic, Language, Information, and Computation - 24th
  International Workshop, WoLLIC 2017, London, UK, July 18-21, 2017,
  Proceedings}, pages 48--67, 2017{\natexlab{b}}.

\bibitem[Beklemishev et~al.(2014)Beklemishev, Fern\'a{}ndez-Duque, and
  Joosten]{beklemishevjoost:2014}
Lev~D. Beklemishev, David Fern\'a{}ndez-Duque, and Joost~J. Joosten.
\newblock On provability logics with linearly ordered modalities.
\newblock \emph{Studia Logica}, 102\penalty0 (3):\penalty0 541--566, 2014.

\bibitem[Bezhanishvili and de~Jongh(2006)]{dejongh:2006}
Nick Bezhanishvili and Dick de~Jongh.
\newblock Intuitionistic logic.
\newblock Technical report, Institute for Logic, Language and Computation,
  University of Amsterdam, 2006.

\bibitem[Boolos(1993)]{boolos:1995}
George~S. Boolos.
\newblock \emph{The Logic of Provability}.
\newblock Cambridge University Press, 1993.

\bibitem[Dashkov(2012)]{dashkov:2012}
Evgeny~V. Dashkov.
\newblock {On the positive fragment of the polymodal provability logic GLP}.
\newblock \emph{{Mathematical Notes}}, 91\penalty0 (3):\penalty0 318--333,
  2012.
\newblock Original Russian text in: \emph{Matematicheskie Zametki},
  91:(3):331--336, 2012.

\bibitem[Dzhaparidze(1988)]{japaridze:1988}
Giorgie Dzhaparidze.
\newblock {The polymodal logic of provability}.
\newblock In \emph{Intensional Logics and Logical Structure of Theories:
  Material from the fourth Soviet-Finnish Symposium on Logic, Telavi, May
  20-24, 1985, Metsniereba, Tbilisi}, pages 16--48, 1988.
\newblock In Russian.

\bibitem[Dzhaparidze(1994)]{Dzh94}
Giorgie Dzhaparidze.
\newblock The logic of arithmetical hierarchy.
\newblock \emph{Annals of Pure and Applied Logic}, 66\penalty0 (2):\penalty0
  89--112, 1994.

\bibitem[Feferman(1996)]{Fefbug}
Solomon Feferman.
\newblock Three conceptual problems that bug me.
\newblock {Lecture text for 7-th Scandinavian Logic Symposium,
  https://math.stanford.edu/$\sim$feferman/papers/conceptualprobs.pdf}, 1996.

\bibitem[Fern\'andez-Duque and Joosten(2013)]{joostenf:2013}
David Fern\'andez-Duque and Joost~J. Joosten.
\newblock The omega-rule interpretation of transfinite provability logic.
\newblock \emph{CoRR}, abs/1302.5393, 2013.

\bibitem[Ignatiev(1993)]{ignatiev:1993}
Konstantin~N. Ignatiev.
\newblock {On Strong Provability Predicates and the Associated Modal Logics}.
\newblock \emph{The Journal of Symbolic Logic}, 58\penalty0 (1):\penalty0
  249--290, 03 1993.

\bibitem[Kreisel(1977)]{kreisel:1977}
Georg Kreisel.
\newblock {Wie die Beweistheorie zu ihren Ordinalzahlen kam und kommt}.
\newblock \emph{Jahresbericht der DMV}, 78\penalty0 (4):\penalty0 177--223,
  1977.

\bibitem[Shapirovsky(2008)]{shapirovsky:2008}
Ilya Shapirovsky.
\newblock {PSPACE-decidability of Japaridze's polymodal logic}.
\newblock In \emph{Advances in Modal Logic}, volume~7, pages 289--304, 2008.

\bibitem[Solovay(1976)]{solovay:1976}
Robert~M. Solovay.
\newblock Provability interpretations of modal logic.
\newblock \emph{Israel Journal of Mathematics}, 25\penalty0 (3-4):\penalty0
  287--304, 1976.

\bibitem[Visser(1981)]{visser:1981}
Albert Visser.
\newblock \emph{Aspects of diagonalization and provability}.
\newblock PhD thesis, Utrecht University, 1981.

\bibitem[Visser(1984)]{Vis84}
Albert Visser.
\newblock The provability logics of recursively enumerable theories extending
  {P}eano {A}rithmetic at arbitrary theories extending {P}eano {A}rithmetic.
\newblock \emph{Journal of Philosophical Logic}, 13:\penalty0 97--113, 1984.

\bibitem[Visser(2002)]{Vis02a}
Albert Visser.
\newblock Substitutions of {$\Sigma_1^0$}-sentences: {E}xplorations between
  intuitionistic propositional logic and intuitionistic arithmetic.
\newblock \emph{Annals of Pure and Applied Logic}, 114\penalty0
  (1--3):\penalty0 227--271, 2002.

\end{thebibliography}
\end{document}